\newtheorem{thm}{Theorem}[section]
\newtheorem{lem}[thm]{Lemma}
\newtheorem{cor}[thm]{Corollary}
\newtheorem{question}[thm]{Question}
\theoremstyle{definition}
\newtheorem{defn}[thm]{\bf{Definition}}
\theoremstyle{remark}
\newtheorem{rem}[thm]{Remark}
\def\square{\hfill${\vcenter{\vbox{\hrule height.4pt \hbox{\vrule width.4pt height7pt \kern7pt \vrule width.4pt} \hrule height.4pt}}}$}
\newenvironment{pf}{{\it Proof:}\quad}{\square \vskip 12pt}
\title[Minimal Surfaces in Hyperbolic $3$-manifolds]{Existence of Minimal Surfaces\\ in Infinite Volume Hyperbolic $3$-manifolds}
\author{Baris Coskunuzer}
\address[B.~C.]{UT Dallas, Dept. Math. Sciences, Richardson, TX 75080}
\email{coskunuz@utdallas.edu}
\author{Zheng Huang}
\address[Z. ~H.]{Department of Mathematics, The City University of New York, Staten Island, NY 10314, USA}
\email{zheng.huang@csi.cuny.edu}
\author{Ben Lowe}
\address[B.~L.]{Department of Mathematics, University of Chicago, Chicago, IL 60637, USA }
\email{loweb24@uchicago.edu}
\author{ Franco Vargas Pallete}
\address[F.~V-P]{IMPA - Instituto Nacional de Matem\'atica Pura e Aplicada, Rio de Janeiro, RJ, Brasil, 22460-320}
\email{vargas.pallete@impa.br}
\newif\ifshowrevisions
\newcommand{\rev}[1]{%
  \ifshowrevisions
    \textcolor{blue}{#1}%
  \else
    #1%
  \fi
}
\newcommand{\PI}{\partial_{\infty}}
\newcommand{\Si}{S^2_{\infty}({\mathbf H}^3)}
\newcommand{\BH}{\mathbf H}
\newcommand{\R}{\mathbf R}
\newcommand{\BZ}{\mathbb Z}
\newcommand{\wh}{\widehat}
\newcommand{\A}{\mathcal{A}}
\newcommand{\C}{\mathcal{C}}
\newcommand{\B}{\mathbf{B}}
\newcommand{\K}{\mathcal{K}}
\newcommand{\U}{\mathcal{U}}
\newcommand{\M}{\mathcal{M}}
\newcommand{\V}{\mathcal{V}}
\newcommand{\G}{\mathcal{G}}
\newcommand{\E}{\mathcal{E}}
\newcommand{\F}{\mathcal{F}}
\newcommand{\h}{\mathcal{H}}
\newcommand{\s}{\mathcal{S}}
\newcommand{\T}{\mathcal{T}}
\newcommand{\cc}{\mathfrak{C}}
\newcommand{\e}{\epsilon}
\newcommand{\dd}{doubly degenerate \xspace}
\newcommand{\hmp}{harmonic map \xspace}
\newcommand{\htm}{hyperbolic $3$-manifold\xspace}
\newcommand{\ms}{minimal surface}
\newcommand{\qf}{quasi-Fuchsian \xspace}
\newcommand{\qfm}{quasi-Fuchsian manifold}
\newcommand{\sff}{second fundamental form \xspace}
\newcommand{\TS}{Teichm\"{u}ller space \xspace}
\begin{document}

\begin{abstract}
The existence of embedded minimal surfaces in non-compact 3-manifolds remains a largely unresolved and challenging problem in geometry. In this paper, we address several open cases regarding the existence of finite-area, embedded, complete, minimal surfaces in infinite-volume hyperbolic 3-manifolds.

Among other results, for doubly degenerate manifolds with bounded geometry, we prove an alternative: either every such manifold contains a closed minimal surface or there exists such a manifold admitting a foliation by closed minimal surfaces. We also construct the first examples of Schottky manifolds with closed minimal surfaces and demonstrate the existence of Schottky manifolds containing infinitely many closed minimal surfaces. Lastly,  for hyperbolic 3-manifolds with rank-1 cusps, we show that a broad class of these manifolds must contain a finite-area, embedded, complete minimal surface.

\end{abstract}
\maketitle

\tableofcontents

\section{Introduction and Statement of Main Results}

The existence of minimal surfaces in Riemannian $3$-manifolds is one of the classical problems in geometric analysis. 
Significant progress has been made in understanding the existence of closed embedded minimal surfaces in compact 3-manifolds, particularly in the positive curvature case. Using the Min-Max approach, Pitts and Simon-Smith established that every closed Riemannian 3-manifold contains a smooth, embedded, closed {\ms}~\cite{pitts2014existence}, \cite{smith1983existence}, \cite{colding2003min}. By the methods of geometric measure theory, Federer \cite{federer2014geometric} demonstrated that for any closed Riemannian 3-manifold, an area-minimizing surface exists in each homology class. Meeks, Simon, and Yau \cite{meeks1982embedded} further showed that every isotopy class contains a minimal surface in such manifolds. More recently, Song \cite{song2018existence} resolved Yau's Conjecture, proving that any closed 3-manifold contains infinitely many minimal surfaces, building on techniques developed by Marques and Neves~\cite{marques2017existence}. 

Various results have been established for compact Riemannian 3-manifolds, but the question of whether embedded closed minimal surfaces exist in non-compact Riemannian 3-manifolds remains open in many contexts. In the particularly important case of infinite-volume hyperbolic 3-manifolds, this problem sits at the intersection of geometric topology and geometric analysis, where the manifold's topology critically influences the existence of closed (or finite-area) minimal surfaces. Recently, the first author~\cite{coskunuzer2020minimal} demonstrated the existence of closed minimal surfaces in several instances of infinite-volume hyperbolic 3-manifolds. However, many key questions remain unresolved. In this paper, we address these open cases by proving multiple zero-infinity dichotomy results and resolving some highly technical rank-1 cases. 

In particular, the first author showed the existence of closed embedded minimal surfaces in all infinite-volume hyperbolic 3-manifolds (\Cref{lem:existence}), with the exception of the following three cases:

\begin{enumerate}
    \item $\M$ is a geometrically infinite product manifold.
    \item $\M$ is topologically a handlebody.
    \item $\M$ contains rank-1 cusps.
\end{enumerate}

Before stating our results, we note that \textbf{throughout the paper, we only consider complete, orientable hyperbolic $3$-manifolds with finitely generated fundamental groups.}
\rev{We recall that a complete hyperbolic $3$-manifold $\M$ homeomorphic to 
$S \times \mathbb{R}$ is called \textit{doubly degenerate} if both of its ends 
are geometrically infinite. A closed, embedded, $2$-sided minimal surface is 
called a \textit{saddle point} minimal surface if it maximizes area in a local 
foliation around it; equivalently, for bumpy metrics, this coincides with being 
an unstable minimal surface. See \Cref{lem:dichotomy2} and \Cref{sec:product} 
for more details.}

Before addressing the missing cases above, we first establish an important preliminary result on the number of minimal surfaces in non-cusped hyperbolic $3$-manifolds by utilizing Song's dichotomy (\Cref{lem:dichotomy2}) for manifolds that are thick at infinity. (See also \cite{stryker2023localization} for results in a similar vein.)

\begin{thm} \label{thm:intro-infinite} Let $\M$ be a complete hyperbolic $3$-manifold with no cusps (i.e., $M= \Gamma \backslash \mathbb{H}^3$ for $\Gamma$ a torsion-free Kleinian group with no parabolic elements.) If $\M$ contains a closed, embedded, saddle point minimal surface, then it contains infinitely many closed, embedded, saddle point minimal surfaces.    
\end{thm}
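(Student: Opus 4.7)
The plan is to invoke Song's dichotomy (\Cref{lem:dichotomy2}), which applies here because the absence of cusps makes $\M$ thick at infinity. The dichotomy presents two alternatives: either $\M$ contains infinitely many closed, embedded, saddle-point minimal surfaces (in which case we are done), or $\M$ admits a smooth foliation $\F$ by closed, embedded minimal surfaces. It therefore suffices to rule out the second alternative under the hypothesis that some closed, embedded, saddle-point minimal surface $\Sigma \subset \M$ exists.

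Suppose for contradiction that $\M$ is foliated by closed, embedded minimal surfaces. Since $\Sigma$ is compact, a first-touch argument in a tubular neighborhood of $\Sigma$ produces a leaf $L$ of $\F$ that is tangent to $\Sigma$ from one side at some point $p$. The strong maximum principle for minimal hypersurfaces then forces $\Sigma$ and $L$ to coincide locally near $p$, and hence globally by analytic continuation. So $\Sigma$ must itself be a leaf of $\F$.

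To complete the contradiction, I would argue that every leaf of $\F$ is weakly stable, which is incompatible with the saddle-point property of $\Sigma$. Locally near $L=\Sigma$, the foliation yields a $1$-parameter family of minimal surfaces $\{L_t\}$ whose variation vector field at $t=0$ has the form $\phi N$ for some positive function $\phi$ on $L$ and unit normal $N$; linearizing the minimal-surface equation along this family gives $J\phi = 0$, where $J = \Delta + |A|^2 + \mathrm{Ric}(N,N)$ is the Jacobi operator on $L$. The existence of a positive function in the kernel of $J$ forces the first eigenvalue of $-J$ to be non-negative, so $L$ is weakly stable. This contradicts the assumption that $\Sigma$ has positive Morse index.

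The main technical subtlety lies in executing the first-touch argument when the leaf space of $\F$ is non-Hausdorff; however, the compactness of $\Sigma$ reduces the problem to a finite cover by foliation charts, where the classical Alexandrov-type strong maximum principle applies without issue. Once the second alternative is excluded, the dichotomy produces the desired infinite family of closed, embedded, saddle-point minimal surfaces.
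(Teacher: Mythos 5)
Your proof misstates Song's dichotomy, and this misstatement is fatal to the argument. Lemma~\ref{lem:dichotomy2} asserts that a complete manifold thick at infinity either contains \emph{infinitely many} saddle point minimal hypersurfaces or contains \emph{none at all}; the second alternative is the complete absence of saddle point minimal surfaces, not the existence of a smooth minimal foliation. With the dichotomy stated correctly, the theorem follows immediately once thickness at infinity is in hand: the hypothesis supplies one saddle point minimal surface, which rules out the zero-alternative, so the infinite-alternative must hold. All of your subsequent work --- the first-touch argument in a tubular neighborhood, the strong maximum principle, the Jacobi-field computation showing leaves are weakly stable --- is aimed at ruling out an alternative that does not actually appear in Song's statement. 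Moreover, even granting your formulation, the final step is shaky: a leaf of a minimal foliation has constant area along the foliation, so it does (non-strictly) ``maximize area in a local foliation'' and therefore need not be excluded from Song's saddle-point class; the equivalence ``saddle point $\Leftrightarrow$ positive Morse index'' only holds under a bumpy-metric assumption, which you have no right to impose on a fixed hyperbolic metric.

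Separately, you assert as self-evident that the absence of cusps makes $\M$ thick at infinity. This is precisely the content of Lemma~\ref{lem:thickness} in the paper and requires a genuine argument: one must show a hyperbolic $3$-manifold without cusps admits no non-compact, finite-area, finite-index, complete, embedded minimal surface. The paper does this by splitting into bounded and unbounded geometry cases and using the monotonicity formula together with the fact --- which uses the no-cusp hypothesis essentially --- that the thin part consists of disjoint Margulis tubes, so that one can locate an escaping sequence of points in the thick part at which definite amounts of area accumulate. Without this, the invocation of Song's dichotomy is unjustified.
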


Next, we study the missing cases above. For case (1) - product manifolds, we prove that an alternative holds when $\M$ has \textit{bounded geometry}, or a uniform lower bound for its injectivity radius at every point:

\begin{thm} [Alternative for Doubly Degenerate Manifolds] \label{thm:intro-product}
Let $\Lambda_\s$ be the space of doubly degenerate hyperbolic $3$-manifolds $\M=\s\times \R$ with bounded geometry. Then, either

\smallskip

\noindent $\bullet$ $\forall \M\in \Lambda_\s$, $\M$ contains a closed, embedded minimal surface in the isotopy class of $\s$, or

\smallskip

\noindent $\bullet$ $\exists \M_0\in \Lambda_\s$ such that $\M_0$ is foliated by closed minimal surfaces $\{\s_t\}$ isotopic to $\s$.

\end{thm}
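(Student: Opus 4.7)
The plan is to prove the contrapositive: assuming some $M_1 \in \Lambda_\s$ contains no closed embedded minimal surface isotopic to $\s$, we construct an $M_0 \in \Lambda_\s$ foliated by such surfaces.

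The first step is quasi-Fuchsian approximation. By Bers density together with the strong convergence theorems for hyperbolic $3$-manifolds of bounded geometry (Minsky, Namazi--Souto, Brock--Canary--Minsky), we may choose a sequence $M^{(n)}$ of quasi-Fuchsian manifolds converging strongly and geometrically to $M_1$. Each $M^{(n)}$ is convex cocompact, so by Schoen--Yau or by a min-max procedure on the convex core one produces a closed embedded minimal surface $\Sigma_n \subset M^{(n)}$ isotopic to $\s$ (of saddle-point type, if $\Sigma_n$ is chosen via min-max). The Gauss equation gives $K_{\Sigma_n} \le -1$, hence $\mathrm{Area}(\Sigma_n) \ge 2\pi|\chi(\s)|$, while strong convergence combined with the bounded-geometry hypothesis on $M_1$ provides uniform upper bounds on $\mathrm{Area}(\Sigma_n)$ and on $\|A_{\Sigma_n}\|$.

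Next, by smooth compactness for minimal surfaces with bounded area and bounded second fundamental form, the $\Sigma_n$ subconverge to a minimal lamination in the geometric limit. A closed leaf of this lamination inside $M_1$ in the isotopy class of $\s$ would contradict our hypothesis, so the $\Sigma_n$ must escape every compact subset of $M_1$. Re-centering along this escape, we choose $p_n \in \Sigma_n$ and pass to a subsequential pointed geometric limit $(M_0, p_\infty)$ of $(M^{(n)}, p_n)$. Bounded geometry and the structure of the degenerate ends of $M_1$ ensure that $M_0$ again lies in $\Lambda_\s$, and the translated surfaces converge smoothly to a closed embedded minimal surface $\Sigma_\infty \subset M_0$ through $p_\infty$.

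Since $M_0 \in \Lambda_\s$ has no cusps and $\Sigma_\infty$ can be taken to be of saddle-point type, \Cref{thm:intro-infinite} produces infinitely many closed embedded saddle-point minimal surfaces in $M_0$. The uniform area and curvature bounds force these to accumulate to a nonempty closed minimal lamination $\mathcal{L} \subset M_0$. The principal technical step, and the main obstacle, is to upgrade $\mathcal{L}$ to a foliation, possibly after passing to one further pointed geometric limit to produce the advertised $M_0$: one must analyze the complementary product regions of $\mathcal{L}$ in $\s \times \R$, iterate the re-centering argument inside any non-saturated region to generate additional minimal surfaces, and extract a final geometric limit in which the minimal lamination exhausts the entire manifold. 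Controlling this iteration uniformly and ruling out pathological limit laminations with non-closed leaves is where the bulk of the technical work will be concentrated.
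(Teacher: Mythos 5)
Your overall strategy (approximate by quasi-Fuchsian manifolds, extract limits of their minimal surfaces, re-center when surfaces escape) is genuinely different from the paper's, which works directly with the harmonic-map energy functional $E$ on $\mathrm{Teich}(\s)$ for $M_1$ itself, and takes a pointed geometric limit of $(M_1, p_n)$ along a minimizing sequence for $E$. Your approach is a reasonable first idea, but the step that your last paragraph defers --- upgrading the limiting lamination to a foliation --- is exactly where the paper's argument does its real work, and your setup makes that step impossible to complete for a structural reason rather than a merely technical one.

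The paper's key tool is \Cref{lem:Anderson}: if $S$ is a closed, essential, \emph{locally area-minimizing} surface in a bounded-geometry $\s \times \R$, then either the manifold is foliated by minimal surfaces isotopic to $S$, or a tubular neighborhood of $S$ carries a foliation $\{S_u\}$ whose mean-curvature vectors all point toward $S$. The second alternative pushes back via the approximating isometries $\Phi_n$ to give a mean-convex product region in $M_1$, which contains a minimal surface by Meeks--Simon--Yau, contradicting the starting hypothesis. This is the mechanism that rules out proper (non-saturated) laminations; there is nothing comparable in your outline, and as you acknowledge, accumulating the surfaces produced by \Cref{thm:intro-infinite} does not by itself force a foliation --- they can perfectly well cluster onto a proper lamination with gap regions.

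More seriously, the hypothesis of \Cref{lem:Anderson} is that $S$ be a local area minimizer, and \Cref{rem:Anderson} explicitly notes that the lemma \emph{fails} if $S$ is only assumed stable (and a fortiori if $S$ is a saddle point). Your construction produces $\Sigma_\infty$ as a limit of min-max (saddle-point) surfaces $\Sigma_n$, precisely because you then want to invoke \Cref{thm:intro-infinite}; but such a limit will not be a local area minimizer, so Anderson's dichotomy does not apply to it. The paper avoids this trap by minimizing the energy functional: a critical point of $E$ at the infimum has as its image a global area minimizer in its homotopy class (cf.\ \cite{SY79}), which is exactly the input \Cref{lem:Anderson} needs. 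There is also a smaller issue earlier on: the claim that the infinitely many saddle-point surfaces from \Cref{thm:intro-infinite} have ``uniform area and curvature bounds'' that force lamination convergence is unjustified --- min-max surfaces come with no a priori area ceiling. In short, to repair the proposal you would essentially need to re-derive \Cref{lem:Anderson} and arrange for the limit surface in $M_0$ to be locally area minimizing rather than of saddle type, at which point you would be reproducing the paper's argument.
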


This theorem establishes a connection between the existence theory of minimal surfaces and the longstanding Hass-Thurston conjecture, on the non-existence of minimal foliations of hyperbolic 3-manifolds (see Question \ref{question:foliation} below). We note that the two possibilities in the alternative are not mutually exclusive.

Building on the ideas of the proof of this result, we show that the growth rate of disjoint stable minimal surfaces is controlled by the volume of the convex core, conditional on a mild strengthening of the Hass-Thurston conjecture. We are able to prove an unconditional statement for certain quasi-Fuchsian manifolds with short geodesics (\Cref{cor:nonconditional}), and we provide an explicit construction of such manifolds.

Next, we establish a strong dichotomy for the existence of minimal surfaces in quasi-Fuchsian manifolds.

\begin{thm}  [Dichotomy for Quasi-Fuchsian Manifolds]  \label{thm:intro-qf}  A quasi-Fuchsian manifold (without rank-1 cusps) satisfies exactly one of the following statements:
    \begin{itemize}
        \item It has a unique closed minimal surface, which is the area minimizer; or
        \item It contains infinitely many saddle point minimal surfaces.
    \end{itemize}
\end{thm}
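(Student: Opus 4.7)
The plan is to reduce the dichotomy to \Cref{thm:intro-infinite} by showing that a quasi-Fuchsian manifold $\M$ with more than one closed embedded minimal surface must contain at least one of saddle-point type. Since quasi-Fuchsian manifolds have no cusps, \Cref{thm:intro-infinite} then upgrades the existence of a single saddle-point minimal surface to infinitely many, yielding the second alternative of the dichotomy. As a starting point, I note that the direct method (Schoen--Yau, Sacks--Uhlenbeck), applied with barriers from a mean-convex neighborhood of the compact convex core, produces an area-minimizing closed stable minimal surface $\Sigma_{\min}$ in the fiber isotopy class; moreover, by the convex hull property every closed minimal surface of $\M$ lies in the convex core and is isotopic to the fiber $\s$. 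In particular, if $\M$ has a unique closed minimal surface it must be $\Sigma_{\min}$, and otherwise there is a second minimal surface $\Sigma \neq \Sigma_{\min}$.

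Assume now that such a $\Sigma$ exists. If $\Sigma$ has Morse index at least one, then $\Sigma$ is itself a saddle-point minimal surface and we are done. If instead $\Sigma$ is stable, I would run a mountain-pass / min-max construction in the region cobounded by the two stable surfaces. By standard maximum-principle arguments, two distinct stable minimal surfaces in the fiber isotopy class of a hyperbolic $3$-manifold are disjoint, so $\Sigma_{\min}$ and $\Sigma$ cobound a compact mean-convex region $W \subset \M$ homeomorphic to $\s \times [0,1]$. Sweepouts of $W$ by surfaces in the fiber isotopy class have min-max width strictly exceeding $\max(|\Sigma_{\min}|, |\Sigma|)$, since any such sweepout must transition from one stable barrier to the other. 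The Simon--Smith / Colding--De Lellis / Ketover min-max theory in mean-convex domains then yields an embedded, smooth, closed minimal surface in the interior of $W$ with Morse index at least one, hence of saddle-point type.

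The main technical obstacle is the stable-versus-stable case: one must guarantee that the min-max output is genuinely a new saddle-point surface rather than a multiple cover of one of the barriers, which requires both the strict width inequality above and the Morse-index upper bound supplied by the min-max regularity theory in mean-convex domains. A subsidiary point is verifying that any two distinct stable minimal surfaces in the fiber isotopy class are indeed disjoint, so that the mean-convex region $W$ is well defined; this can be reduced to a standard application of the strict maximum principle to the signed distance function between the two surfaces. Once these points are handled, the proof concludes by invoking \Cref{thm:intro-infinite} to pass from a single saddle-point minimal surface to infinitely many.
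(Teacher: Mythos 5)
Your overall strategy mirrors the paper's: produce one saddle-point minimal surface from the existence of a second minimal surface, then invoke \Cref{thm:intro-infinite} (equivalently \Cref{cor:zero-infinity}) to get infinitely many. However, there are two genuine gaps in the intermediate steps.

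First, the disjointness claim fails. You assert that two distinct stable minimal surfaces in the fiber isotopy class must be disjoint ``by a standard application of the strict maximum principle to the signed distance function.'' The maximum principle only rules out one-sided tangencies; it does nothing to prevent two stable minimal surfaces from intersecting transversally, and in fact they can. The paper handles this differently: given two intersecting local area minimizers, it minimizes area in each of the two unbounded complementary components to produce a \emph{new} disjoint pair, rather than claiming the original pair is disjoint. This replacement is essential — your region $W$ is simply not well defined as stated.

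Second, the min-max step is where the real difficulty lives and you leave it unresolved. You acknowledge that the min-max output could be a multiple cover of a barrier, and propose to rule this out via ``strict width inequality'' plus the Morse-index upper bound, but the strict width inequality is exactly what fails when one of the barriers is \emph{degenerate} stable (nontrivial Jacobi field), which can and does occur for hyperbolic metrics — they are not bumpy. Moreover, even if the width is strictly larger than both boundary areas, the min-max output could be yet another \emph{stable} minimal surface sitting in the interior of $W$, not a saddle. The paper avoids both issues at once by first extracting a \emph{maximal} collection of stable minimal surfaces $\Sigma = \Sigma_1, \dots, \Sigma_n = \Sigma'$ inside $\Omega$ (so no stable surface lies between consecutive ones), reducing WLOG to the case that none of them is a saddle, and then applying a specialized result, \cite[Proposition 7.6]{GLP21}, to a consecutive pair. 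That proposition carries out precisely the degenerate-barrier analysis your sketch omits. Without it (or an equivalent substitute handling degenerate stability and the ``stable output'' possibility), your mountain-pass argument does not close.
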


For case (2) - hyperbolic handlebodies, we construct the first examples of Schottky manifolds containing closed minimal surfaces. Furthermore, we show existence of Schottky containing infinitely many closed, embedded minimal surfaces.

\begin{thm} \label{thm:intro-Schottky} For any $3$-dimensional handlebody with at least two $1$-handles there exists a homeomorphic Schottky manifold containing infinitely many closed, embedded, saddle point minimal surfaces.   
\end{thm}

Although the infinite family of minimal surfaces given by this result are saddle point minimal surfaces, we also construct examples of Schottky manifolds that contain arbitrarily many stable closed minimal surfaces (\Cref{sec:Schottky}).

 Finally, we show the existence of a complete, finite-area, embedded minimal surface for a broad class of hyperbolic 3-manifolds with rank-1 cusps.  In contrast to the first author's previous work on the existence problem for minimal surfaces using shrinkwrapping surfaces \cite{coskunuzer2020minimal}, our proof uses a doubling trick that depends on an analysis of the behavior of the convex core near the rank one cusps. 

\begin{thm} [Manifolds with Rank-1 Cusps] \label{thm:intro-rank1} Let $\M$ be a complete infinite volume hyperbolic $3$-manifold with rank-1 cusps homeomorphic to the interior of a compact manifold with boundary. Assume $\M$ is topologically neither a handlebody nor a geometrically infinite product manifold. If one of the ends of $\M$ is geometrically finite, then $\M$ contains a finite area, complete, embedded, minimal surface $\s$.
\end{thm}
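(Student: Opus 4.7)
The plan is to construct $\s$ as a geometric limit of area-minimizing surfaces in an exhaustion of $\M$ by mean-convex compact submanifolds, combined with the Calegari--Gabai shrinkwrapping technique to control behavior near the rank-1 cusps. First I would fix a topological model for the surface. Since $\M$ has a geometrically finite end $E$ and is not a handlebody, $E$ carries a nontrivial conformal boundary at infinity, and pushing a convex core neighborhood of $E$ slightly inward yields a properly embedded essential surface $\s_0 \subset \M$ whose ends enter the rank-1 cusps along horocyclic annuli. This $\s_0$ fixes the isotopy class to be realized by a minimal surface of finite area.

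Next I would exhaust $\M$ by compact submanifolds $\{K_n\}$ constructed by truncating the rank-1 cusps at fixed shallow horospherical levels and truncating the ends by mean-convex barrier surfaces (e.g.\ parallel copies of the convex core boundary in $E$, and analogous barriers in any other ends, pushed out far enough to contain $\s_0 \cap K_n$). Within each $K_n$, I would minimize area in the isotopy class of $\s_0 \cap K_n$ rel $\partial K_n$, applying shrinkwrapping around the cores of short geodesics in the rank-1 cusp regions to prevent the minimizer from contracting off into the cusps. This gives a sequence $\s_n \subset K_n$ of stable minimal surfaces with controlled boundaries. The crucial uniform area bound has two ingredients: the portion of $\s_n$ inside any rank-1 cusp region is bounded using the exponential decay of horocyclic cross-sections and the monotonicity formula, while the portion in the thick/convex-core part is bounded in terms of the area of $\s_0$ there.

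With uniform area and curvature bounds in hand (stability plus finite topology of the ambient ends), I would extract a subsequential geometric limit $\s_\infty$ by standard compactness for stable minimal surfaces. Three points then need verification: (a) nonvacuity of $\s_\infty$, which follows from the shrinkwrapping barriers at the cusp cores that force positive area in each bounded region once $\s_n$ crosses it; (b) completeness and properness, which come from the combination of local area bounds and the mean-convex barriers, ensuring the limit extends across the boundary tori/annuli and has finite-area cusped ends at the rank-1 cusps; (c) finite total area, which follows from summing the uniform local bounds over the finitely many ends.

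The main obstacle is step (a) together with ruling out escape of $\s_n$ to infinity through the other ends. For the geometrically finite end $E$ this is handled by the barrier coming from a large equidistant surface to the convex core and the fact that $\s_0$ is homologically nontrivial in $E$. For a potential geometrically infinite end $E'$, escape is ruled out by the assumption that $\M$ is not a geometrically infinite product manifold: if $\s_n$ drifted entirely into $E'$, the limit (after passing to a cover) would exhibit $\M$ as foliated by minimal surfaces in the end $E'$, forcing the global product structure that the hypothesis excludes. This is the most delicate part, and it is essentially where the topological hypotheses of the theorem are consumed.
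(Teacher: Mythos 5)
Your proposal captures the overall skeleton of the paper's argument: construct a mean-convex barrier from the geometrically finite end, exhaust $\M$ by compact mean-convex domains obtained by truncating the cusps, minimize area inside each, and extract a limit. This is indeed the route the paper takes via \Cref{thm:barrier}, \Cref{thm:existence-rank1}, and \Cref{cor:geomfin}. However, the step you yourself flag as the most delicate — ruling out escape to infinity — is handled incorrectly, and this is precisely where the topological hypotheses do the work.

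The paper rules out escape with a homological argument followed by the Bounded Diameter Lemma. Since $\M$ is not a handlebody and not a product, it has at least three (cusped or non-cusped) ends; the area minimizer in the homology class of the barrier $\s_1$ therefore cannot be supported entirely inside any other end $\E_i$, $i\ge 2$, and must intersect the relative compact core $\wh{\C}_\M$. Once that is established, \Cref{lem:boundeddiameter} (the adaptation of \cite[Lemma 2.3]{coskunuzer2020minimal} to manifolds with rank-1 cusps) bounds the distance of the minimizer from $\wh{\C}_\M$ in $\M^0$; this is the uniform control that prevents the minimizers from wandering through arbitrarily thin Margulis tubes in an end with unbounded geometry. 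Your proposal does not invoke any such diameter bound: the monotonicity formula gives only a local lower bound on area near a point and does not by itself prevent a homologically essential surface from drifting out along a chain of Margulis tubes.

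Your replacement argument for excluding escape into a geometrically infinite end $E'$ — that drifting $\s_n$ would ``exhibit $\M$ as foliated by minimal surfaces in the end $E'$, forcing the global product structure'' — does not hold. A drifting sequence of minimizers yields at most a single subsequential limit minimal surface (or an empty limit), not a foliation, and there is no cover to pass to in this situation. The role of the ``not a geometrically infinite product'' hypothesis in the paper is different: as explained in \Cref{rem:product}, if $\M$ were a product $\s\times\R$ with all rank-1 cusps on one side, one could not pin down a compact core through which every homological representative must pass, and the minimizing sequence could escape through the non-cusped end. The hypothesis is consumed in identifying a fixed compact set that the minimizers must meet, not in contradicting a foliation.

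Two smaller discrepancies: the paper minimizes in a (relative) homology class — the special class $\Y_\Lambda$ — and applies Federer–Fleming compactness together with a diagonal argument to construct the complete limit surface, rather than minimizing rel boundary in a fixed isotopy class as you propose; the homological setting is what makes the compactness and regularity theory go through cleanly. And in the geometrically finite case treated by \Cref{cor:geomfin}, the convex barrier surface is simply the boundary of the convex core facing that end, so the shrinkwrapping machinery you invoke is only needed to produce barriers in geometrically infinite ends.
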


\begin{rem}
A compact, oriented, irreducible, atoroidal 3-manifold with non-empty boundary carries many infinite volume hyperbolic  metrics that have a geometrically finite end with a rank-1 cusp, see e.g. \cite{canary2003approximation}.  
\end{rem}

\subsubsection*{Organization of the paper} In \Cref{sec:background}, we overview the related results and give a basic background on the geometry and topology of hyperbolic $3$-manifolds. 
In \Cref{sec:dichotomy}, we prove the thickness at infinity result for non-cusped hyperbolic $3$-manifolds, and prove~\Cref{thm:intro-infinite}. In~\Cref{sec:product}, we first establish our alternative result for doubly degenerate manifolds (\Cref{thm:intro-product}). Next, we prove our dichotomy result for quasi-Fuchsian manifolds (\Cref{thm:intro-qf}).
In~\Cref{sec:Schottky}, we give the construction of the first examples of Schottky manifolds with closed minimal surfaces~(\Cref{thm:intro-Schottky}). 
In~\Cref{sec:rank1cusps}, we study hyperbolic $3$-manifolds with rank-1 cusps and prove \Cref{thm:intro-rank1}. 
In~\Cref{sec:remarks}, we give our final remarks and discuss the remaining open questions.

\subsubsection*{Acknowledgements} The authors express their gratitude to Antoine Song, Yair Minsky, and Saul Schleimer for generously sharing their insights. We thank Andrea Seppi for helpful comments on Section 4. We thank the referee for carefully reading our paper and for helpful comments. The research of BC was partially supported by the National Science Foundation (Grant No. DMS-2202584) and the Simons Foundation (Grant No. 579977). ZH is partially supported by a PSC-CUNY grant. BL was supported by NSF grant DMS-2202830. FVP was funded by the European Union (ERC, RaConTeich, 101116694). \footnote{Views and opinions expressed are however those of the author(s) only and do not necessarily reflect those of the European Union or the European Research Council Executive Agency. Neither the European Union nor the granting authority can be held responsible for them.}

\section{Background} \label{sec:background}

In this section, we introduce the basics of infinite volume hyperbolic $3$-manifolds and discuss the existence of minimal surfaces within them. \textbf{Throughout the paper, we only consider complete, orientable hyperbolic $3$-manifolds with finitely generated fundamental groups. Similarly, all surfaces discussed are orientable}. For further details on the topic, see ~\cite{minsky2002end,calegari2007hyperbolic,coskunuzer2020minimal}.

\subsection{Finite Volume Hyperbolic $3$-Manifolds} \label{sec:finite_vol}
There exist two types of finite volume hyperbolic $3$-manifolds, classified by whether or not they are compact: compact (closed) and non-compact (cusped). The existence of closed embedded minimal surfaces in closed hyperbolic $3$-manifolds was established decades ago via Almgren-Pitts min-max theory \cite{pitts2014existence}. Since finite volume cusped hyperbolic $3$-manifolds only have rank-2 cusps~\cite{benedetti1992lectures}, this implies that the manifold is homeomorphic to the interior of a compact $3$-manifold where every boundary component is a 2-torus. Recently, the existence of closed minimal surfaces has been shown in this non-compact case by several authors \cite{collin2017minimal, huang2017closed, Rub87}. Furthermore, Chambers-Liokumovich obtained a very general existence result for any finite volume noncompact Riemannian $3$-manifold \cite{chambers2020existence} by generalizing Almgren-Pitts min-max techniques. Recently, Song showed that finite volume $3$-manifolds contain infinitely many closed embedded minimal surfaces \cite{song2023dichotomy}.

\subsection{Rank-1 and Rank-2 Cusps} \label{sec:cusps}

While rank-2 cusps are relatively straightforward, understanding rank-1 cusps in hyperbolic 3-manifolds can be more challenging. Rank-2 cusps have finite volume and are structured as $\mathbb{T}^2 \times [0, \infty)$, whereas rank-1 cusps have infinite volume and take the structure $S^1 \times \mathbb{R} \times [0, \infty)$. Unlike rank-2 cusps, which are isolated within the manifold and do not interact with other ends or cusps, rank-1 cusps behave quite differently. In the next section, we will explain how rank-1 cusps are situated within the manifold, how they are represented by annuli in the boundary of a (relative) compact core, and how they significantly influence the structure of the ends of the hyperbolic manifold.

From a geometric perspective, rank-$1$ cusps can be illustrated as follows. Consider a horoball $\h_q$ in $\BH^3$ with its boundary $\PI \h_q$ at a point $q \in \Si$. Let $\varphi$ be a parabolic isometry that fixes the point $q$. The infinite cyclic group $\G_\varphi$ generated by $\varphi$ is isomorphic to $\BZ$. A rank-1 cusp $\U_k$ is then isometric to the quotient $\h_q/\G_\varphi$ for some $\h_q$ and $\varphi$~\cite{benedetti1992lectures}. For instance, in the upper half-space model of $\BH^3$, let $\h = \{(x, y, z) \mid z \geq 1\}$ and $\varphi(x, y, z) = (x + 1, y, z)$. For these $\h$ and $\varphi$, a rank-1 cusp $\U$ is isometric to $[0,1] \times \R \times [1,\infty)$, where the boundaries $\{0\} \times \R \times [1,\infty)$ and $\{1\} \times \R \times [1,\infty)$ are identified, forming $S^1 \times \R \times [1,\infty)$. Similarly, rank-2 cusps are formed by taking the quotient of $\h_q$ by the action of $\BZ \oplus \BZ$, generated by two independent parabolic isometries that fix the point $q \in \Si$. For example, with the same notation, if $\psi(x, y, z) = (x, y + 1, z)$, then a rank-2 cusp $\V$ would be isometric to $\h/\langle \varphi, \psi \rangle$, which corresponds to $[0,1] \times [0,1] \times [1,\infty)$ with opposite sides identified.

\subsection{Infinite Volume Hyperbolic $3$-Manifolds} \label{sec:infinitevolume}
These manifolds are classified based on the geometry of their ends \cite{thurston2022geometry, minsky2002end}. Let $\M$ be a complete hyperbolic $3$-manifold with infinite volume, and let $\C_\M$ denote a \textit{compact core} of $\M$ (see \Cref{fig:end2DCC}). The compact core $\C_\M$ is a compact codimension-$0$ subset of $\M$ such that $\M$ is homeomorphic to the interior of $\C_\M$ \cite{scott1973compact, calegari2007hyperbolic}. The manifold can then be decomposed as $\M - \C_\M = \bigcup_{i=1}^n \E_i$, where each $\E_i$ is referred to as an \textit{end of} $\M$. The topology of each end is $\s_i \times (0, \infty)$, where $\s_i$ is a component of $\partial \C_\M$, a closed surface of genus $\geq 2$ \cite{agol2004tameness, calegari2006shrinkwrapping}. We call these ends $\E_i$ \textit{topological ends} of $\M$. This description applies to any hyperbolic $3$-manifold, regardless of whether it contains rank-1 cusps. If $\M$ has no rank-1 cusps, this is the only decomposition we use.

However, \textit{in the presence of rank-1 cusps}, a subtler decomposition is needed, through \textit{relative compact cores}~\cite{canary2006notes}. If $\M$ is an infinite volume hyperbolic $3$-manifold with cusps, let $\M^0$ be the manifold obtained from $\M$ by removing all rank-$1$ and rank-$2$ cusps. \rev{Here, a \textit{toroidal component} of $\partial \M^0$ is a boundary 
component homeomorphic to a $2$-torus $\mathbb{T}^2$, corresponding to a 
rank-2 cusp, and an \textit{annular component} of $\partial \M^0$ is a boundary 
component homeomorphic to $S^1 \times \mathbb{R}$, corresponding to a rank-1 
cusp. An annulus $A \subset \partial \wh{\C}_\M$ is called \textit{incompressible} 
if the inclusion-induced map $\pi_1(A) \rightarrow \pi_1(\M)$ is injective.} 
A \textit{relative compact core} $\wh{\C}_\M$ for $\M$ is a compact core of 
$\M$ which intersects each toroidal component of $\partial \M^0$ in the entire 
torus, and intersects each annular component of $\partial \M^0$ in a single 
incompressible annulus (See~\Cref{fig:end2DRelCC}). In particular, relative compact core $\wh{\C}_\M$ is a compact core where one can identify all rank-$1$ and rank-$2$ cusps in $\M$ through tori and marked annuli in $\partial \wh{\C}_\M$, respectively.

\begin{figure*}[t]
     \centering
     \begin{subfigure}[b]{0.32\textwidth}
         \centering
         \includegraphics[width=\linewidth]{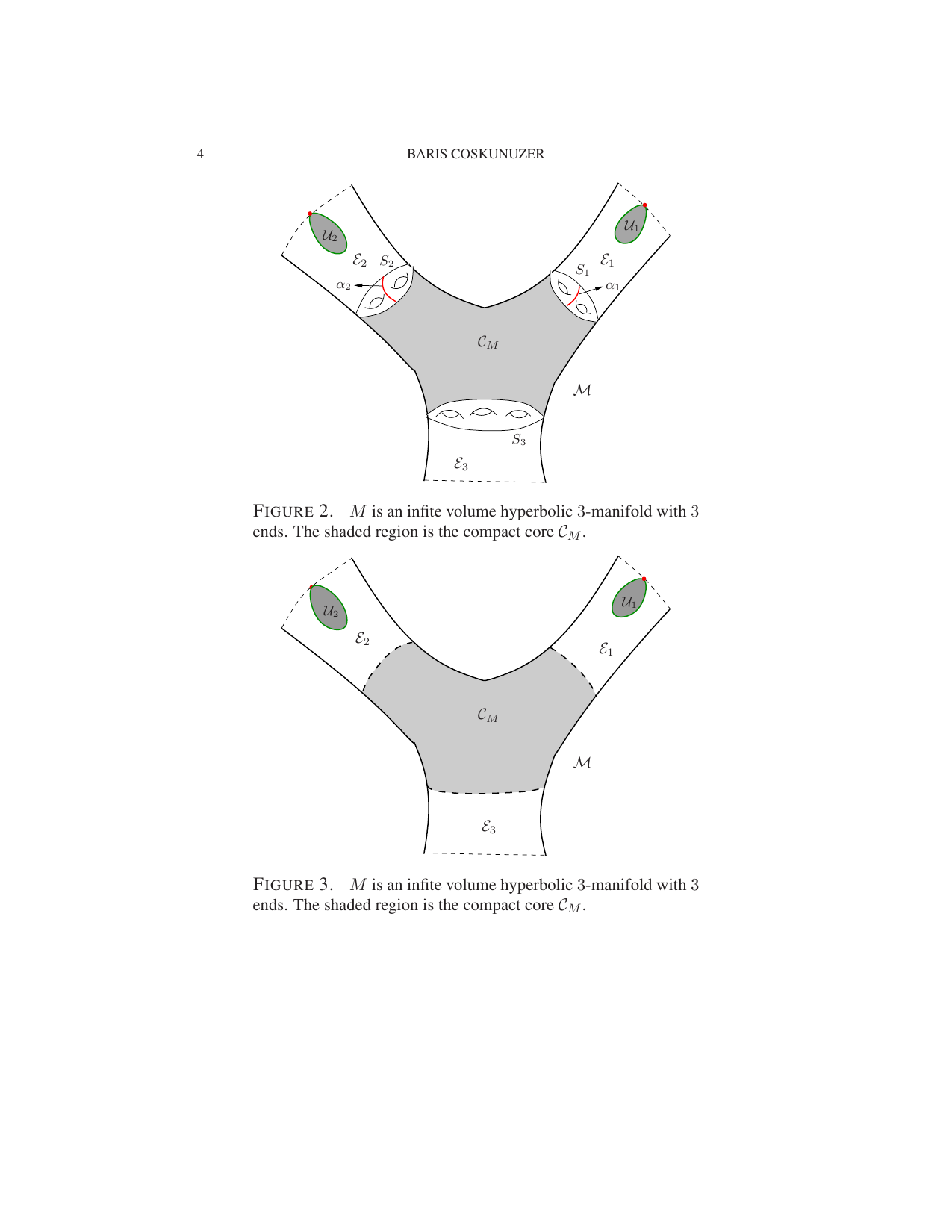}
         \caption{\scriptsize Rank-1 cusps}
         \label{fig:end3D}
     \end{subfigure}
    \begin{subfigure}[b]{0.32\textwidth}
         \centering
         \includegraphics[width=\linewidth]{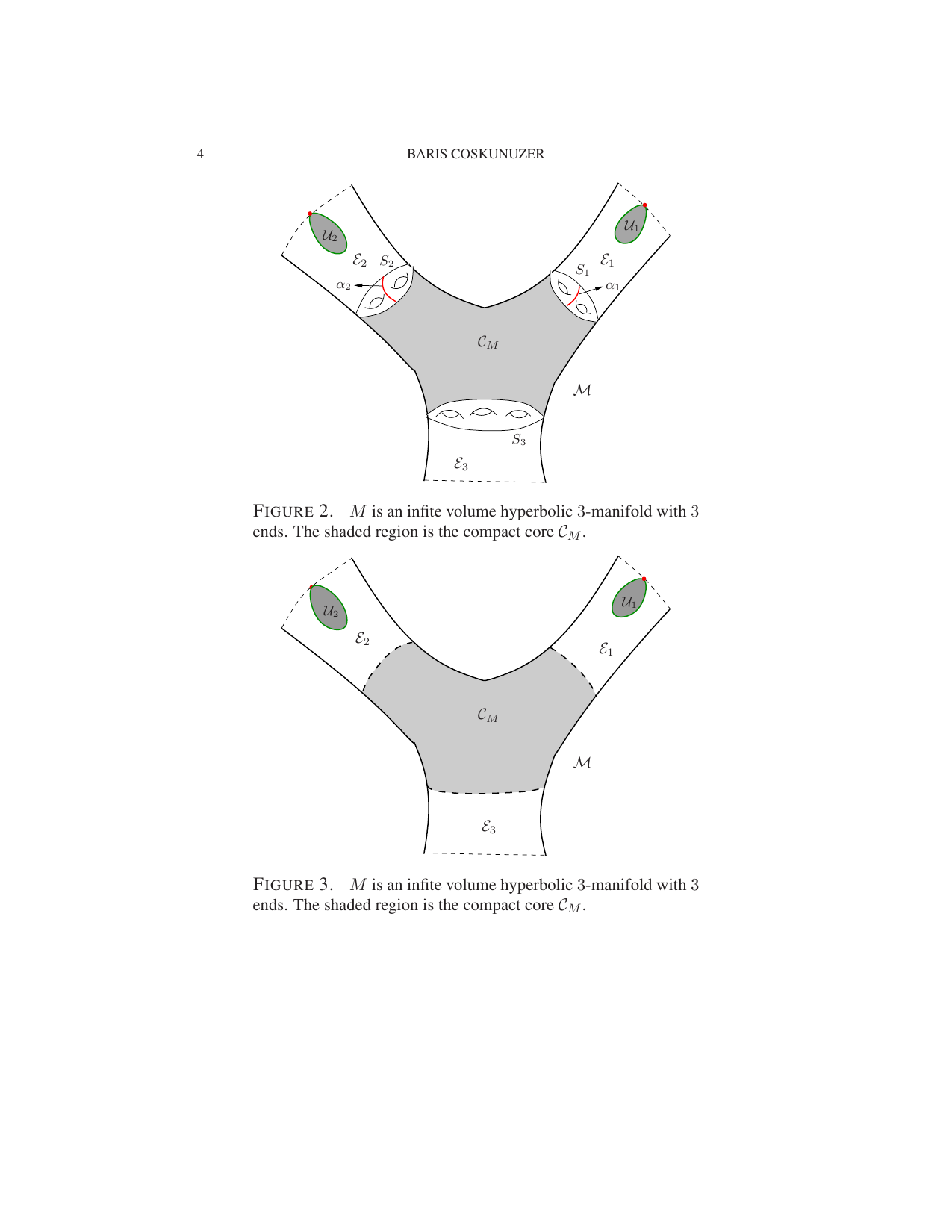}
        \caption{\scriptsize Topological Ends}
         \label{fig:end2DCC}
     \end{subfigure}
     \begin{subfigure}[b]{0.32\textwidth}
         \centering
         \includegraphics[width=\linewidth]{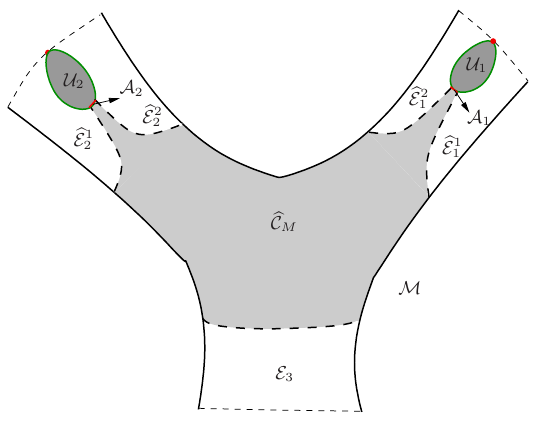}
       \caption{\scriptsize Geometric Ends}
         \label{fig:end2DRelCC}
     \end{subfigure}
        \caption{\scriptsize 2D illustration of the compact core $\C_M$ (center) and the relative compact core $\wh{\C}_M$ (right) for a hyperbolic $3$-manifold $\M$ with rank-1 cusps $\U_i$. The compact core $\C_M$ decomposes $\M$ into its topological ends, $\M - \C_M = \bigcup \E_i$, where each end $\E_i = \s_i \times [0, \infty)$. The rank-1 cusps $\U_i$ correspond to specific curves $\alpha_i$ on $\s_i$ (left). The relative compact core $\wh{\C}_M$ further decomposes $\M$ into geometric ends $\{\wh{\E}^j_i\}$. If $\{\alpha_k\}$ separates $\s_i$, such as $\s_1 - \alpha_1 = \mathcal{T}^1_1 \cup \mathcal{T}^2_1$, then $\M - \wh{\C}_M = \bigcup \wh{\E}^j_i \cup \bigcup \U_k$, where each $\wh{\E}^j_i \simeq \mathcal{T}^j_i \times [0, \infty)$. Here, $\wh{\E}^j_i$ are also referred as cusped ends as $\mathcal{T}^j_i$ are cusped (open) surfaces.
        \label{fig:ends}}
                \vspace{-.1in}
\end{figure*}

We call each component of $\M^0 - \wh{\C}_\M = \bigcup_{i,j} \wh{\E}_i^j$ as a 
\textit{geometric end} of $\M$, where $\wh{\E}_i^j \subset \E_i$ (a topological 
end). By \cite{bonahon1986bouts}, every hyperbolic $3$-manifold $\M$ with a finitely 
generated fundamental group admits a \textit{relative compact core} $\wh{\C}_\M$. \rev{The 
components of $\partial \wh{\C}_\M - \partial \M^0$ are (possibly punctured) 
surfaces $\{\T_i^j\}$, where $i$ runs over the $n$ topological ends of $\M$, 
and for each $i$, $j$ runs from $1$ to $m_i$. Here, $m_i$ is the number of 
connected components obtained by removing from the closed surface $\s_i \subset 
\partial\C_\M$ the core curves of all rank-1 cusps within $\E_i$. When one or 
more of these core curves are separating in $\s_i$, they split $\s_i$ into 
$m_i > 1$ components (see \Cref{fig:ends}). We call each corresponding component 
$\wh{\E}_i^j \simeq \T_i^j \times [0,\infty)$ of $\M^0 - \wh{\C}_\M$ a 
\textit{geometric end} of $\M$, where $\wh{\E}_i^j \subset \E_i$ (a topological 
end). The geometric ends $\{\wh{\E}_i^j\}$ of $\M^0$ thus correspond one-to-one 
with the components $\{\T_i^j\}$ of $\partial \wh{\C}_\M - \partial \M^0$. If 
no rank-1 cusp exists in a given topological end $\E_i$, then $m_i=1$ and the 
topological and geometric ends coincide.}

To clarify, in the presence of rank-1 cusps, the annuli $\{\A_k\}$ corresponding to rank-1 cusps $\{\U_k\}$ in $\partial \wh{\C}_\M\cap \partial \M^0$ are removed from $\partial \wh{\C}_\M$, and some of the surfaces $\T_i^j$ in the decomposition $\bigcup_{i,j} \T^j_i=\partial \wh{\C}_\M - \partial \M^0$ are no longer closed, but a punctured surface. Hence, we also call the ends $\E_i\simeq S_i\times[0,\infty)$ corresponding to such punctured surfaces $\T_i^j$ {\em cusped ends} (See~\Cref{fig:ends}).

For example, let $\Sigma$ be a punctured torus, and consider a hyperbolic structure on $\M=\Sigma\times \R$ where the puncture in $\Sigma$ corresponds to a rank-1 cusp. Then, $\M$ would be topologically nothing but a genus 2 handlebody, which has one topological end. On the other hand, $\M^0=\wh{\Sigma}\times \R$ where $\wh{\Sigma}$ would be the compact surface obtained by removing a small neighborhood of the puncture in $\Sigma$. Then, our relative compact core decomposition would produce two ends $\Sigma^+\cup\Sigma^-=\partial \wh{\C}_\M - \partial \M^0$ where $\partial \M^0=\partial\wh{\Sigma}\times \R$. Therefore, we would have two geometric ends $\E^\pm=\Sigma^\pm\times [0,\infty)$ in $\M$ along with one rank-1 cusp. See~\cite{marden2007deformations} for further details.

The \textit{convex core} $\cc_\M$ of $\M$ is the smallest convex submanifold of $\M$, which is homotopy equivalent to $\M$. 
We call an end $\E$ of $\M^0$ \textit{geometrically finite} if it has a neighborhood that does not intersect the convex core $\cc_\M$. This simply means that the end $\E$ geometrically has a product structure, i.e., $\E=S\times[0,\infty)$ where the areas of the equidistant surfaces $S\times \{t\}$  are exponentially growing with $t$. If an end $\E$ is not geometrically finite, then we call it geometrically infinite. In that case, the end $\E$ is contained in the convex core $\cc_\M$.
For $\wh{\C}_\M$ a relative compact core of $\M$, we call an end $\E$ of $\M^0$ \textit{simply degenerate} if there exists a sequence of geodesics $\{\alpha_i\}\in \E$ leaving every compact subset of $\M$ (escaping to infinity). Bonahon (\cite{bonahon1986bouts}) showed that every geometrically infinite end is simply degenerate: if $\E\simeq S\times [0,\infty)$ for $S$ is a component in $\partial \wh{\C}_\M-\partial \M^0$, then for any $K>0$, there exists an $N$, such that $\alpha_n\in S\times [K,\infty)$ for any $n\geq N$.

Geometrically infinite ends are classified into two categories as follows: If the geometrically infinite end $\E$ has a positive injectivity radius, then we call $\E$ has {\em bounded geometry}. Otherwise, we say that $\E$ has {\em unbounded geometry}. In both cases, there exists a sequence of geodesics $\{\alpha_n\}$ in the end $\E$ escaping to infinity with $|\alpha_n|$ bounded, while for the unbounded geometry case, we also have $|\alpha_n|\to 0$.

A complete hyperbolic $3$-manifold $\M$ is called {\em geometrically finite} if all ends are geometrically finite. This is equivalent to saying that the convex core $\cc_\M$ has finite volume. Otherwise, we call the manifold {\em geometrically infinite}. For more details on the ends of hyperbolic $3$-manifolds, see~\cite{canary1993ends,minsky1994rigidity}.

\subsection{Existence Results} As discussed before, until a decade ago, very little was known about the existence of closed, embedded minimal surfaces in noncompact hyperbolic $3$-manifolds except in cases like geometrically finite manifolds. Finite volume non-compact hyperbolic $3$-manifolds are done with results by the authors mentioned above. By adapting shrinkwrapping techniques from~\cite{calegari2006shrinkwrapping}, the first author proved the existence of closed embedded minimal surfaces in most infinite volume hyperbolic $3$-manifolds.

\begin{lem} \label{lem:existence} \cite{coskunuzer2020minimal} Let $\M$ be an infinite volume complete hyperbolic $3$-manifold. Then, $\M$ contains a closed, embedded minimal surface $S$ if $\M$ is not one of the following three categories:
\begin{enumerate}
    \item $\M$ is a geometrically infinite product manifold.
    \item $\M$ is topologically a handlebody.
    \item $\M$ contains rank-1 cusps.
\end{enumerate}
    
\end{lem}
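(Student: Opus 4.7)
The plan is to find an incompressible closed surface $\Sigma$ in $\M$ together with a compact mean-convex region containing a representative of its isotopy class, and then minimize area in that class via Meeks-Simon-Yau. The three hypotheses are tuned to make both ingredients simultaneously available: since $\M$ is not a handlebody, the compact core $\C_\M$ has at least one incompressible boundary component, which supplies $\Sigma$; since $\M$ has no rank-1 cusps, each end is a clean product and all boundary components of the (relative) compact core are closed surfaces; and since $\M$ is not a geometrically infinite product, there is enough extra topology to provide barriers on both sides of a minimizing sequence.

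In the geometrically finite case (possibly with rank-2 cusps), I would take a $t$-neighborhood $N_t$ of the convex core $\cc_\M$. For $t>0$ this is a smooth compact domain with strictly mean-convex boundary pointing inward, and the incompressible surface $\Sigma$ is isotopic to a component of $\partial N_t$. Minimizing area in this isotopy class inside $N_t$ via Meeks-Simon-Yau then produces a smooth closed embedded minimal surface: mean convexity of $\partial N_t$ prevents the minimizing sequence from escaping to the boundary, while incompressibility of $\Sigma$ in $\M$ prevents collapse through compressing disks.

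In the geometrically infinite case (with the product case excluded by hypothesis), I would instead use shrinkwrapping as in Calegari-Gabai. By Canary's tameness theorem and Bonahon's theorem, every geometrically infinite end contains a sequence of closed geodesics $\{\alpha_n\}$ exiting every compact subset; these act as one-dimensional barriers that a shrinkwrapping surface may touch but not cross. Combining geodesic barriers inside the geometrically infinite ends with convex-core-neighborhood barriers along any geometrically finite ends — or with a second geodesic barrier in another geometrically infinite end when all ends are degenerate — traps a surface isotopic to $\Sigma$ in a compact region, and shrinkwrapping produces a closed embedded minimal surface in the limit.

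The main obstacle is precisely this trapping step, and the three excluded categories are exactly the ones in which it fails: in a geometrically infinite product neither side provides a barrier and a minimizing sequence sweeps out to infinity in both ends; in a handlebody every boundary surface is compressible and a minimizing sequence collapses through a compressing disk; and in the presence of rank-1 cusps, escaping geodesics can be pulled into the cusp and fail to act as barriers while the relevant end boundaries are cusped annular pieces rather than closed surfaces, so the shrinkwrapped limit is either non-closed or degenerate. Verifying the lemma amounts to checking, case by case, that outside these three obstructions one can always arrange both an incompressible isotopy class and a pair of barriers that trap it in a compact region.
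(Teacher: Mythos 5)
This lemma is not proved in the present paper --- it is cited from \cite{coskunuzer2020minimal} --- but the methods of that paper can be reconstructed from the machinery recalled and adapted in Section 6 here (shrinkwrapping surfaces, the special homology class $\Y_\Lambda$, Federer--Fleming compactness, GMT regularity). Your proposal captures the correct geometric picture: geodesic barriers from Bonahon/Calegari--Gabai in degenerate ends, mean-convex barriers from a neighborhood of the convex core in geometrically finite ends, and the observation that the three excluded categories are exactly where the trapping step fails. But the technical route you propose is genuinely different from the cited one, and in one place it is more fragile.

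The actual argument minimizes area in a \emph{homology} class via geometric measure theory (Federer--Fleming compactness for integral currents plus interior regularity, and \cite[Lemma 1.31]{calegari2006shrinkwrapping} for the coincidence set), not in an \emph{isotopy} class via Meeks--Simon--Yau. This matters for two reasons. First, the GMT route needs only homological nontriviality of the barrier surface, which is automatic once you know it separates a compact mean-convex region with geodesic obstacles on one side --- the minimizer cannot have zero mass because it must separate $\alpha_1$ from $\alpha_2$ within the class $\Y_\Lambda$. Second, and more importantly, your claim that ``since $\M$ is not a handlebody, the compact core has at least one incompressible boundary component'' is stronger than what is needed and not obviously true in the generality required (think of compact cores that are not compression bodies, where boundary compressions can produce disconnected pieces). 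The non-handlebody hypothesis is used in \cite{coskunuzer2020minimal} only to guarantee that the relevant homology class is nondegenerate (positive infimal area), which is a much weaker and easier statement than incompressibility of a boundary component. If you insist on the MSY/isotopy framework, you would also need to control what happens to the isotopy class when the minimizing sequence tries to compress --- MSY produces a $\gamma$-reduction in general, and without incompressibility the limit may be a union of spheres --- whereas the homology/GMT argument avoids this entirely. So: right geometric intuition, wrong minimization framework, and a topological claim you would need to either prove or circumvent.
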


Although some nonexistence examples have been given for case (2) involving $\BH^3$ or Fuchsian Schottky manifolds in~\cite{coskunuzer2020minimal}, no such results exist for most cases in (1) and (3). In this paper, we address all three cases and present new results.

\section{Dichotomy for Non-Cusped Hyperbolic $3$-manifolds} \label{sec:dichotomy}

In this section, we first prove that hyperbolic $3$-manifolds with no cusps are thick at infinity, i.e., it cannot contain a non-compact, finite area, complete, embedded minimal surface. Then, by using Song's dichotomy result~\cite{song2023dichotomy} for manifolds thick at infinity, we show that any such manifold either contains infinitely many, closed, embedded, unstable minimal surfaces or it contains no unstable, embedded, closed minimal surfaces. First, let us recall a few notions.

\begin{defn} [Manifolds thick at infinity \cite{song2023dichotomy}]
Let $(\M^{n+1},g)$ be a complete $(n+1)$-dimensional Riemannian manifold. It is said to be \textit{thick at infinity} (in the weak sense) if any connected, finite volume, finite index, complete, minimal hypersurface in $(\M,g)$ is closed.
\end{defn}

\begin{rem}
Song did not impose the finite index condition in his definition of thick at infinity, but the results of his paper continue to hold with the above definition of thickness at infinity, that applies only to finite index minimal surfaces. It is possible but not entirely straightforward to verify that, for example, closed Riemannian manifolds are thick at infinity in the original definition (without the finite index condition), but we do not give the argument here.  
\end{rem}

Song proved the following zero-infinity dichotomy for manifolds thick at infinity.

\begin{lem}\cite[Theorem 1.2]{song2023dichotomy}  \label{lem:dichotomy2}
Let $(\M,g)$ be an $(n+1)$-dimensional complete manifold with $2\leq n\leq 6$, thick at infinity. Then the following dichotomy holds true:
\begin{enumerate}
\item either $\M$ contains infinitely many saddle point minimal hypersurfaces,
\item or there is no saddle point minimal hypersurface in $\M$. 
\end{enumerate}
\end{lem}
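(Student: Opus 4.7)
The plan is to follow Song's approach from his resolution of Yau's conjecture, adapted to the non-compact setting where min-max theory must be set up carefully. Suppose $\M$ contains at least one saddle point minimal hypersurface $\Sigma_0$; the goal is then to produce infinitely many distinct such hypersurfaces (otherwise alternative (2) holds and there is nothing to prove).

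First I would set up an intrinsic min-max framework on $\M$. Since $\M$ is complete but possibly non-compact, I would exhaust it by an increasing sequence of nested compact domains $\{\Omega_j\}$, each containing $\Sigma_0$ in its interior, and consider the $p$-widths $\omega_p(\Omega_j)$ in the sense of Almgren--Pitts and Marques--Neves, working with relative cycles with boundary in $\partial \Omega_j$. Passing to a limit as $j\to\infty$, and using the thick-at-infinity hypothesis to prevent mass from escaping, would define intrinsic $p$-widths $\omega_p(\M,g)$ for compactly supported sweepouts.

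Next, I would establish a Gromov-type Weyl law $\omega_p(\M,g) \to \infty$ as $p\to\infty$ by combining the Liokumovich--Marques--Neves Weyl law applied in the compact exhaustion with geometric data supplied by $\Sigma_0$ (for instance, iterating a slab-type sweepout transverse to $\Sigma_0$) to pin down nontrivial lower bounds. The min-max machinery then produces, for each $p$, a finite-index stationary integral varifold $\Sigma_p$ of mass $\omega_p$, and the thick-at-infinity assumption (in the finite-index formulation used here) forces each $\Sigma_p$ to be closed. If only finitely many distinct closed saddle-point minimal hypersurfaces existed, they could contribute only boundedly many $p$-widths (using Sharp-type smooth compactness for minimal hypersurfaces of bounded area and index), contradicting the Weyl asymptotics; hence infinitely many distinct $\Sigma_p$ must exist.

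The hard part will be the non-compactness of $\M$: making the min-max construction intrinsic to $\M$, controlling mass loss through $\partial \Omega_j$ as $j\to\infty$, and extracting uniform curvature and index bounds on the produced critical varifolds so that smooth compactness applies on the interior of $\M$. The thickness-at-infinity hypothesis, formulated here with the finite-index restriction highlighted in the preceding remark, is precisely what prevents the sequence $\{\Sigma_p\}$ from escaping to infinity as complete, finite-area, non-compact surfaces, so that each lands in the closed (and therefore genuinely saddle-point) category required for the dichotomy.
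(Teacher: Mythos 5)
This lemma is quoted directly from Song \cite[Theorem 1.2]{song2023dichotomy}; the paper does not reprove it, so there is no in-text argument for you to match. Still, your blind sketch is worth assessing, and it has a genuine gap.

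The central difficulty is your reliance on a Weyl law. The Liokumovich--Marques--Neves Weyl law gives $\omega_p \sim c_n \,\mathrm{Vol}(\Omega)^{1/(n+1)} p^{1/(n+1)}$, so on a compact exhaustion $\{\Omega_j\}$ of an infinite-volume manifold, the relative $p$-widths $\omega_p(\Omega_j)$ diverge as $j\to\infty$ for each fixed $p$. The thick-at-infinity hypothesis does nothing to make these limits finite; it is a statement about the classification of finite-volume finite-index minimal hypersurfaces, not a mass-control mechanism for sweepouts. So the quantity ``$\omega_p(\M,g)$'' you propose to define by passing to the limit in the exhaustion is either identically infinite or undefined, and the Weyl asymptotics that are supposed to overwhelm a hypothetical finite list of minimal hypersurfaces never get off the ground. (This is precisely why the global-widths approach, which works so well for Yau's conjecture on closed manifolds, does not transfer.)

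Song's actual argument is anchored locally to the given saddle point $\Sigma_0$ rather than globally: the saddle property gives a one-sided mean-concave foliation of a tubular neighborhood, which is used to set up a \emph{confined} (local) min-max problem in a region of controlled geometry containing $\Sigma_0$. Thickness at infinity is what guarantees the resulting stationary varifold is a closed hypersurface rather than a complete finite-area leak to infinity. One then iterates: either the process produces a new minimal hypersurface at each stage, or it terminates in a configuration (e.g.\ a minimal lamination or a mean-convex exhaustion) that is ruled out by thickness at infinity together with the existence of $\Sigma_0$. The finite-index formulation of thickness (emphasized in the remark preceding the lemma) is exactly what you need to apply to the confined min-max outputs, which have bounded index by construction. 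So the correct mechanism is a local min-max iteration seeded by the saddle point, not a global Weyl-law count; your sketch identifies several of the right ingredients (thickness preventing escape, Sharp compactness to organize the output) but hangs them on a scaffold that cannot bear the weight in infinite volume.
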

Here a 2-sided minimal hypersurface is said to be a \emph{saddle point} if it maximizes area in a local foliation around it, whereas a saddle point 1-sided minimal hypersurface is defined as such if the 2-sided double cover is a saddle point in the corresponding double cover of a small tubular neighborhood of the minimal hypersurface. Note that if the metric is bumpy (i.e. no closed minimal hypersurface has a non-trivial Jacobi field) then saddle point minimal hypersurfaces are exactly \textit{unstable 2-sided closed minimal hypersurfaces} and \textit{1-sided closed minimal hypersurfaces with unstable double cover}. For more details, see~\cite[Section 3]{song2023dichotomy}.

Next, we show any hyperbolic $3$-manifold (compact or noncompact) with no cusps is thick at infinity.

\begin{lem} \label{lem:thickness} Any complete hyperbolic $3$-manifold without cusps is thick at infinity.    
\end{lem}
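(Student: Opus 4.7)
The plan is to derive a contradiction from assuming $\M$ contains a connected, complete, finite-area, finite-index, non-compact embedded minimal surface $S$. Three standard tools will be combined: Schoen's curvature estimate for stable minimal surfaces, the monotonicity formula for minimal surfaces, and the Margulis thick-thin decomposition of a hyperbolic $3$-manifold.

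Fix $\epsilon$ smaller than the $3$-dimensional Margulis constant, and set $S_{\mathrm{thick}} = S \cap \{\mathrm{inj}_\M \geq \epsilon\}$. Since $S$ has finite index, there is a compact $K_0 \subset S$ with $S \setminus K_0$ stable; Schoen's estimate then bounds $|A_S| \leq C$ on $S \setminus K_0$, which in turn yields a uniform lower bound on the area of each intrinsic disk $B_S(p, \delta)$ of small fixed radius $\delta$ centered at a point with ambient injectivity radius at least $\epsilon$. My first claim is that $S_{\mathrm{thick}}$ is intrinsically bounded in $S$: if there were $p_n \in S_{\mathrm{thick}}$ with $d_S(p_n, p_0) \to \infty$, I would pass to a subsequence with $d_S(p_n, p_m) > 2\delta$ and produce infinitely many disjoint intrinsic disks of area at least $c\delta^2$, contradicting finiteness of area. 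Since $S$ is complete, Hopf--Rinow promotes intrinsic-bounded-and-closed to compact, so $S_{\mathrm{thick}}$ is compact.

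Next I invoke the Margulis lemma: the cuspless hypothesis on $\M$ forces every connected component of the thin part $\M_{<\epsilon}$ to be a \emph{Margulis tube}, i.e., a compact solid-torus neighborhood of a short closed geodesic. The compact surface $S_{\mathrm{thick}}$ has finitely many boundary circles, each tracing the transition of $S$ into exactly one Margulis tube, so $S$ enters only finitely many tubes $T_1,\dots,T_m$ and
\[ S \;=\; S_{\mathrm{thick}} \;\cup\; \bigcup_{i=1}^m (S \cap T_i). \]
This displays $S$ as a subset of a finite union of compact sets, hence $S$ is bounded in $\M$. A repeat of the disjoint-disk argument---now applied to a hypothetical accumulation point of $S$ inside $\M \setminus S$, where infinitely many intrinsically-far-apart points $p_n \in S$ would collect---shows $S$ is closed in $\M$, and a closed bounded subset of the proper metric space $\M$ (proper by Hopf--Rinow) is compact. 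This contradicts the standing assumption that $S$ is non-compact.

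The main obstacle I anticipate is the first step: establishing intrinsic boundedness of $S_{\mathrm{thick}}$ and (eventually) properness of $S$ in $\M$ using the combination of finite area, Schoen's bound on the stable part, and monotonicity requires careful juggling of intrinsic and extrinsic geometric control, since a priori one only knows $d_\M \leq d_S$. The Margulis decomposition step and the final boundedness-implies-compactness passage are essentially formal once the cuspless hypothesis supplies the compactness of the thin-part components.
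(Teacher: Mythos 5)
Your reorganization is recognizably built from the same ingredients as the paper's proof (finite index plus curvature estimates for stable pieces, intrinsic disk-area lower bounds against the finite-area hypothesis, and compactness of Margulis tubes in the absence of cusps), but the case split is inverted. The paper argues: either $\s$ lies in a compact set, where an accumulation-point disk argument applies, or $\s$ escapes to infinity, in which case the compact thin-part components force escaping points to pass through $\M_{\mathrm{thick}}$ arbitrarily far out, where the monotonicity formula applies. You instead prove directly that $S \cap \M_{\mathrm{thick}}$ is compact, deduce that $S$ lies in a compact set, and conclude with a single accumulation argument; this has the mild advantage of not splitting on whether $\M$ itself has bounded geometry. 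Two of your intermediate steps, however, need repair. The assertion that $S_{\mathrm{thick}}$ ``has finitely many boundary circles'' tacitly assumes $\epsilon$ is a regular value of $\mathrm{inj}_\M$ restricted to $S$, which is not automatic for the merely Lipschitz injectivity-radius function. The conclusion you want --- that $S$ meets only finitely many Margulis tubes --- does follow from compactness of $S_{\mathrm{thick}}$, but by a different route: since $S$ is connected, each tube $T$ met by $S$ satisfies $S_{\mathrm{thick}} \cap \partial T \neq \emptyset$; the closed tubes are open-and-closed components of the locally connected set $\M_{\le \epsilon}$, so a finite subcover of the compact set $S_{\mathrm{thick}} \cap \M_{\le\epsilon}$ isolates the finitely many tubes that $S$ enters.

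The closing move also has a gap: ``closed and bounded in $\M$, hence compact, contradicting non-compactness of $S$'' conflates compactness of $S$ as a subset of $\M$ (subspace topology) with compactness of $S$ as a Riemannian surface (intrinsic topology). For a complete embedded minimal surface these agree exactly when the embedding is proper --- which is part of what must be shown, since a non-proper leaf accumulating on itself is precisely the scenario you need to exclude, and the disk argument at a point $q \notin S$ does not address a point $q \in S$ at which $S$ accumulates extrinsically but not intrinsically. The cleaner finish is to bypass ``closed'' entirely: once $S$ is contained in a compact set $K$, observe that $\min_K \mathrm{inj}_\M > 0$; since $S$ is complete and (by assumption) non-compact it has infinite intrinsic diameter, so one may choose $p_n \in S$ with $d_S(p_n, p_0) \to \infty$ and $d_S(p_n,p_m) \ge 2\delta$ for $n \neq m$; these lie in $K$, so all but finitely many carry disjoint intrinsic disks of uniformly positive area by the curvature estimate and the ambient injectivity-radius bound on $K$, contradicting finite area. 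With these two adjustments your argument closes correctly.
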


\begin{pf} Let $\M$ be an hyperbolic $3$-manifold with no cusp. We proceed by contradiction. Assume to the contrary that there is a non-compact, complete, finite index, and finite area minimal surface $\s$ in $\M$ exists, where the area $\|\s\|=C$. 

First, we show that $\s$ cannot completely stay in a compact region in $\M$, i.e. $\PI\s\neq \emptyset$. Assume on the contrary that $\s\subset \K_0$ where $\K_0$ is a compact region in $\M$. Fix $\e_0>0$. Let $d_{\s}(\cdot,\cdot)$ represent the intrinsic distance in $\s$. Then, as $\s$ is a complete, embedded surface, there is a sequence of points $\{p_i\}$ in $\s\cap \B^\M_{\e_0}(q)$ with $d_\s(p_i,p_j)\geq 2\e_0$ for any $i<j$. Let $\B^{\s}_{\e_0}(p_i)$ represent the intrinsic $\e_0$-disk in $\s$ with center $p_i$. Since  $\s$ has finite index, we know that for all but finitely many of the $p_i$ the $\e_0$-disk centered at $p_i$ is stable.  Therefore by the fact that $\M$ has bounded geometry and the curvature estimates for stable minimal surfaces (see for instance \cite{Sch83}), for all but finitely many of the $p_i$ the injectivity radius of $\s$ at $p_i$ is at least some constant $\epsilon< \e_0$.  

Since $\s$ is a minimal surface in a hyperbolic $3$-manifold, the sectional curvature of $\s$ is bounded from above by $-1$. Then, by a simple estimate~\cite[Section 1.1]{calegari2006shrinkwrapping}, we have the area of intrinsic disk $\|\B^\epsilon (p_i)\|\geq 2\pi(\cosh{\epsilon}-1)\geq \pi r^2$. As $d_\s(p_i,p_j)\geq 2\e_0$, $\B^{\s}_{\e_0}(p_i)\cap \B^{\s}_{\e_0}(p_j)=\emptyset$ for any $i<j$. Since $C=\|\s\|\geq \sum_{i=1}^\infty \|\B^\s_{\epsilon }(p_i)\|$, this is a contradiction. This shows $\PI\s\neq \emptyset$. This also finishes the case when $\M$ is a closed hyperbolic $3$-manifold.

Now, assume that $\s$ is not contained in a compact subset of $\M$. If $\M$ has bounded geometry (say injectivity radius $\rho_\M>\e_0>0$), choose a sequence of points $\{p_k\}$ on $\s$ tending to infinity and at a pairwise distance from each other of at least $1$ inside $\M$.  Then since $\M$ has bounded geometry, we can apply the monotonicity formula which implies that the area of the ball of radius $1$ centered at $p_k$ in $\M$ intersected with $\s$ has area at least some definite $\epsilon'>0$. This contradicts the fact that $\s$ has a finite area.  

If $\M$ has unbounded geometry, let $\e_3$ be the Margulis constant. Let $\M_{thin}$ be the thin part of $\M$, i.e., $\M_{thin}=\{x\in\M\mid \rho_\M(x)\leq \e_3\}$ where $\rho_\M(x)$ is the injectivity radius of $\M$ at $x$. Let $\M_{thick}=\M-\M_{thin}$. Since $\M$ has no cusps, $\M_{thin}$ is a \underline{disjoint} union of Margulis tubes~\cite{benedetti1992lectures,futer2019effective}, i.e., $\M_{thin}=\bigsqcup_i\mathbf{T}_{\gamma_i}$ where $\mathbf{T}_\gamma$ is the Margulis tube around the short geodesic $\gamma$. This implies $\s-\M_{thin}$ contains a sequence $\{p_k\}$ diverging to infinity, with $d(p_k,p_j)>2\e_3$. Notice, as $\{p_k\}\in \M_{thick}$, $\rho_\M(p_k)\geq \e_3$ for any $k$. The estimate from \cite{calegari2006shrinkwrapping}[Section 1.1] applies as it did two paragraphs above to show that the area of the ball of radius $\e_3$ centered at $p_k$ in $\M$ intersected with $\s$ has area at least some definite $\e'>0$. Again, this contradicts the fact that $\s$ has finite area. The proof follows.    
\end{pf}

By combining this result with Song's dichotomy, we obtain the following corollary:

\begin{cor} \label{cor:zero-infinity} Let $\M$ be a complete hyperbolic $3$-manifold with no cusps. If $\M$ contains a closed, embedded, saddle point minimal surface, then it contains infinitely many closed, embedded, saddle point minimal surfaces.    
\end{cor}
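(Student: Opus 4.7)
The plan is to derive \Cref{cor:zero-infinity} as an immediate consequence of Song's dichotomy (\Cref{lem:dichotomy2}) applied to the setting established by \Cref{lem:thickness}. The first step is to note that since $\M$ is a hyperbolic $3$-manifold with no cusps, \Cref{lem:thickness} applies directly and $\M$ is thick at infinity. This places $\M$ within the scope of the ambient hypothesis of \Cref{lem:dichotomy2}, taken with $n=2$.

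The second step is to invoke \Cref{lem:dichotomy2} itself. It produces a clean alternative: either $\M$ contains infinitely many saddle point minimal hypersurfaces, or $\M$ contains no such hypersurface. By hypothesis, $\M$ already contains at least one closed, embedded, saddle point minimal surface. Thus the second branch of the dichotomy is excluded, and we are forced into the first branch.

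The only bookkeeping issue is to match the conclusion of \Cref{lem:dichotomy2} to the language of the corollary: one must verify that the infinitely many saddle point minimal hypersurfaces provided by Song's theorem are genuinely closed and embedded. Embeddedness is built into the min-max construction underlying \Cref{lem:dichotomy2}. Closedness is automatic from thickness at infinity itself, since saddle point minimal surfaces are of finite index (and of finite area as produced by the min-max theory), and \Cref{lem:thickness} precisely rules out complete, finite-area, finite-index, non-compact embedded minimal surfaces in $\M$.

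There is no real obstacle at this stage, as both key inputs are already in place. All the analytic work (the monotonicity argument, the stable minimal surface curvature estimates of Schoen, and the Margulis tube decomposition in the unbounded-geometry case) has been absorbed into the proof of \Cref{lem:thickness}, and the combinatorics of min-max is packaged into \Cref{lem:dichotomy2}. Accordingly, I expect the written proof to occupy only a couple of lines.
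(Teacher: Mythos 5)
Your proposal is exactly the paper's argument: apply \Cref{lem:thickness} to conclude that a cusp-free hyperbolic $3$-manifold is thick at infinity, then invoke Song's dichotomy (\Cref{lem:dichotomy2}), and rule out the ``zero'' branch using the hypothesized saddle point minimal surface. The paper presents this as an immediate corollary without further elaboration, so your slightly more detailed bookkeeping about closedness and embeddedness is entirely consistent with the intended proof.
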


We end this section with two brief remarks:
\begin{rem} [Stable Minimal Surfaces] We note that the above results and Song's dichotomy theorem shed no light on the number of stable minimal surfaces in a manifold $\M$. A manifold can have a stable minimal surface without closed saddle point minimal surfaces. For example, let $\M\simeq \s\times \R$ be a Fuchsian manifold, then $\M$ contains a unique minimal surface, which is the totally geodesic surface $\s\times\{0\}$. More generally, every almost Fuchsian manifold \cite{uhlenbeck1983closed} only contains exactly one closed {\ms}, and there are also examples that are not almost Fuchsian \cite{huang2021beyond}.     
\end{rem}

\begin{rem} [Thickness for Cusped Hyperbolic $3$-Manifolds] We note that our thickness at infinity result does not generalize to hyperbolic 3-manifolds with cusps. For this case, a straightforward example is a Fuchsian manifold $\F\times \R$ where $\F$ is a punctured surface. We provide further examples of cusped, quasi-Fuchsian hyperbolic $3$-manifolds containing finite-area, non-compact, complete, embedded minimal surfaces in Section \ref{sec:rank1cusps}.
\end{rem}

\section{Product Hyperbolic $3$-Manifolds} \label{sec:product}

In this section, we first present our alternative result for {\dd \ \htm}s with bounded geometry. These manifolds can be seen as limits of {\qfm}s. As an application, we obtain results on counting minimal surfaces in these manifolds. In particular, we derive a linear growth estimate for the number of disjoint embedded {\ms}s in {\qfm}s with short geodesics (\Cref{sec:QFcount}), and we provide an explicit construction of these manifolds (\Cref{sec:shortgeodesics}).

\subsection{Alternative for Doubly Degenerate Manifolds} 

In this subsection, we say $\M$ is {\it\dd} if $\M$ is a complete {\htm} homeomorphic to $S \times \mathbb{R}$, where both ends are geometrically infinite. We say $\M$ has {\it bounded geometry} if the injectivity radius of $\M$ has a positive lower bound, i.e. $\rho_\M \ge \epsilon_0>0$.

Before stating our result, we need the following dichotomy result, which is an application of the implicit function theorem.  

\begin{lem} \label{lem:Anderson} 
Let $S$ be a closed essential locally area-minimizing surface in a complete {\htm} $M$ diffeomorphic to $S \times \mathbb{R}$ of bounded geometry. Then either 
\begin{enumerate} 
\item The manifold $M$ is foliated by {\ms}s isotopic to $S$, or
\item There is a foliation $S_u$ of a tubular neighborhood of $S$, $u \in (-\epsilon,\epsilon)$, $S_0=S$, so that the mean curvature vector of $S_u$ for $u \neq 0$ points  towards $S$.
\end{enumerate}   
\end{lem}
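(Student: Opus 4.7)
The plan is to perform a second-variation analysis combined with an implicit function theorem argument in a tubular neighborhood of $S$. Let $\nu$ be a unit normal to $S$ and parametrize nearby surfaces as normal graphs $\Sigma_f = \{\exp_p(f(p)\nu(p)) : p \in S\}$ for $f \in C^{2,\alpha}(S)$. The mean curvature expands as $H(\Sigma_f) = Lf + Q(f)$, where
\[
L = \Delta_S + |A_S|^2 - 2
\]
is the Jacobi operator (using $\mathrm{Ric}_{\BH^3}(\nu,\nu) = -2$) and $Q(f) = O(\|f\|^2)$. Local area minimization forces $\lambda_1(-L) \geq 0$, and the first eigenfunction $\phi_1$ can be chosen strictly positive. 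The conclusion will split according to whether $\lambda_1 > 0$ or $\lambda_1 = 0$.

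In the \emph{strictly stable} case $\lambda_1 > 0$, I would take the candidate foliation $\Sigma_u := \Sigma_{u\phi_1}$ for $u \in (-\delta,\delta)$. Positivity of $\phi_1$ guarantees disjointness and foliation of a tubular neighborhood. The expansion
\[
H(\Sigma_u) = -\lambda_1\,u\,\phi_1 + O(u^2)
\]
is nonvanishing for small $|u| \neq 0$ with sign opposite to $u$ in the $\nu$-direction, so the mean curvature vector of each $\Sigma_u$ points back toward $S$. This directly yields conclusion (2).

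In the \emph{weakly stable} case $\lambda_1 = 0$, the positive Jacobi field $\phi_1$ spans $\ker L$ (by simplicity of the principal eigenvalue). I would run a Lyapunov--Schmidt reduction: write $f = t\phi_1 + w$ with $w \perp \phi_1$ in $L^2(S)$, use invertibility of $L$ on $\phi_1^\perp$ and the implicit function theorem to solve the transverse equation $(I-\pi)H(\Sigma_{t\phi_1+w}) = 0$ for $w = w(t)$, and reduce to the scalar bifurcation equation $b(t) := \int_S H(\Sigma_{t\phi_1+w(t)})\,\phi_1\,dA$. If $b \equiv 0$, then $\Sigma_{t\phi_1+w(t)}$ is a genuine $1$-parameter family of minimal surfaces through $S$, which I would extend to a global minimal foliation of $M \cong S \times \R$ by continuation, using the bounded-geometry hypothesis to guarantee uniform curvature bounds and smooth subsequential convergence of leaves. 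This gives conclusion (1). If instead $b \not\equiv 0$, the locally area-minimizing property combined with the first-variation identity $A'(t) \approx -b(t)\|\phi_1\|_{L^2}^2$ forces the leading term $c_k t^k$ of $b$ to have $k$ odd and $c_k < 0$ (otherwise area would strictly decrease on one side of $S$). Consequently $H(\Sigma_{t\phi_1+w(t)})$ is asymptotically a negative multiple of $t^k \phi_1$ with $k$ odd, so the mean curvature vector of the foliation $\Sigma_{t\phi_1+w(t)}$ again points toward $S$ from both sides, giving conclusion (2).

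The main obstacle is the weakly stable case $\lambda_1 = 0$. The bifurcation analysis is delicate: one must correctly extract the sign of the leading-order term of $b(t)$ from the minimizing property, and verify that the resulting family of (possibly nonminimal) leaves truly foliates a neighborhood of $S$. In the $b \equiv 0$ branch, propagating the local family of minimal surfaces to a global foliation of $M$ requires a Bers-type compactness argument where bounded geometry is essential—one needs uniform area and curvature bounds to prevent leaves from degenerating or accumulating at infinity as one sweeps along the $\R$-direction.
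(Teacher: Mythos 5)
Your proposal is correct and follows the same overall strategy as the paper: split into the strictly stable and degenerate stable cases, and in both use the implicit function theorem to produce a transverse family $S_u$ through $S$. The strictly stable case is handled identically. The difference is in the degenerate stable case. The paper applies the IFT directly to the map $F(w,t)=H(\exp(w\nu))-t\phi$ (which is your Lyapunov--Schmidt reduction phrased slightly differently, producing the family $S_u$ with $H(S_u)$ a scalar multiple of $\phi$), and then cites Anderson's dichotomy from \cite{anderson1983cmh}[Section 5] to conclude that either $S$ is locally isolated among minimal surfaces or a neighborhood of $S$ is minimally foliated; the mean-convex conclusion in the isolated case follows from the sign being forced by local area minimization, just as in your odd-$k$/negative-$c_k$ argument. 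You instead carry out the bifurcation analysis by hand: you reduce to the scalar function $b(t)$, note that either $b\equiv 0$ (minimal foliation) or $b$ has a finite-order zero whose leading coefficient is constrained by the area-minimizing property. This is a valid, somewhat more self-contained replacement for the Anderson citation. One thing worth stating explicitly, which you use implicitly, is that the dichotomy ``$b\equiv 0$ or $b$ has a finite-order zero'' rests on real-analyticity of the mean-curvature operator on a hyperbolic (hence real-analytic) manifold; the paper's appeal to Anderson uses the same fact under the hood. The global extension of the local minimal foliation to all of $M$ via bounded geometry is handled the same way in both arguments.
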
 

\begin{proof}
If $S$ is strictly stable, then the images under the normal exponential map of small multiples of the bottom eigenfunction of the second variation operator of $S$ multiplied by a unit normal vector field form a mean-convex foliation of a tubular neighborhood of $S$ (see e.g. the proof of Theorem 5.2 in \cite{huang2021beyond}.)  

If $S$ is degenerate stable, assume first that no neighborhood of $S$ admits a foliation by minimal surfaces. Let $\phi$ be a Jacobi field of the second variation operator, which we can take to be positive and with $L^2$-norm equal to $1$ (recall that the space of such Jacobi fields is one-dimensional, and each nonzero Jacobi field is nowhere vanishing.  This is a standard fact about bottom eigenfunctions of linear elliptic operators \cite{evans2022partial}[Chapter 6].) Let $\nu(x)$ be the unit normal vector to the point $x\in S$, determined by fixed choices of orientation for $S$ and $M$. For $w$ a smooth function of sufficiently small $C^{2,\alpha}$-norm, consider the functional $F(w,t) = H_w(x) - t\phi(x)$.  Here $H_w$ is the real valued function on $S$ defined at $x_0$ by taking the inner product of the mean curvature vector of the graphical surface parametrized by $x \mapsto exp_x(w(x)\nu(x))$ at $exp_{x_0}(w(x_0)\nu(x_0))$, with the unit normal vector to that surface at $exp_{x_0}(w(x_0)\nu(x_0))$ pointing towards the same end of $M$ as the normal vector field $\nu$ to $S$.

  Then $F$ maps a neighborhood of $(0,0)$ in $C^{2,\alpha}(S) \times \mathbb{R}$ to $C^{0,\alpha}(S)$, and $DF(0,0)$ is surjective with one-dimensional kernel given by $\phi$.  Surjectivity follows from diagonalizing the second variation operator restricted to the orthogonal complement of its one-dimensional kernel, which is spanned by $\phi$. Applying the implicit function theorem gives a 1-parameter family of graphical surfaces $S_u$, $S_0=S$, so that the mean curvature of each $S_u$ (viewed as a function on $S$ via pairing the mean curvature vector of each $S_u$ with the unit normal vector pointing in the same direction as $\nu$ and then precomposing with the normal exponential map at $S$) is a scalar multiple $t(u)$ of $\phi(x)$. 

The function $t(u)$ so that the mean curvature of $S_u$ is $t(u) \phi$ is analytic.  Therefore, if $S_u$ had vanishing mean curvature for a sequence of $u$ tending to zero, then $t(u)$ would have to vanish identically (Compare \cite{anderson1983cmh}[Section 5].) Therefore for $u$ small enough, $t(u)=0$ only if $u=0$.  Similarly, $t'(u)$ cannot vanish for a sequence of $u$ tending to zero, and must therefore be nonzero for $u$ small enough and nonzero.  Because $S_0=S$ is locally area-minimizing, $t'(u)$ must be negative for non-zero u sufficiently small, and $t(u)$ is decreasing.  

Because $(S_u, t(u))$ must be tangent to the kernel of $DF(0,0)$ at $u=0$, we know that the derivative in $u$ of the function of which $S_u$ is the graph over $S$ at $u=0$ is a nonzero multiple of $\phi$. Since $\phi$ is nowhere vanishing, this implies that $S_u$ is disjoint from $S$ for $u$ sufficiently small.  

For a fixed small $\epsilon_0>0$, we can find $\epsilon'>0$, $\epsilon'<<\epsilon_0$, so that $S_u$ is contained in the region bounded by $S_{-\epsilon_0}$ and $S_{\epsilon_0}$ for $u \in (-\epsilon', \epsilon')$, and so that $S_u$ is disjoint from $S$ if $u \neq 0$.  

We claim also that $S_{u_1}$ is disjoint from $S_{u_2}$ if $u_1 \neq u_2$ and $u_1,u_2 \in (-\epsilon',\epsilon')$.  If $u_1$ and $u_2$ have different signs then this is clear, so assume that $u_1>u_2>0$ (the case where both are negative follows from the same argument.) Suppose for contradiction that $S_{u_2}$ is not contained in the region bounded by $S_{u_1}$ and $S$. Then $S_{u_1}$ must intersect $S_{u_2}$ because it is disjoint from $S$.  We let $u'$ be the supremum of the set of parameters $u$, $\epsilon_0\geq u>0$, so that $S_u \cap S_{u_2}$ is non-empty.  Then $S_{u'}$ and $S_{u_2}$ must intersect non-transversely so that $S_{u_2}$ is contained on one side of $S_{u'}$.  Since $u'\geq u_1 >u_2$, this is a contradiction by the strong maximum principle and the fact that since $t(u)$ is monotone, the mean curvature of $S_{u'}$ has the same sign but is larger in magnitude than that of $S_{u_2}$.  It follows that, for $u\in (-\epsilon',\epsilon')$, the $S_u$ give a mean-convex foliation of a neighborhood of $S$.

Therefore if a neighborhood of $S$ in $M$ does not admit a foliation by minimal surfaces, it must be the case that there is a neighborhood of $S$ with a mean-convex foliation: i.e., for which all surfaces in the foliation not equal to $S$ have mean curvature vectors pointing towards $S$.  Otherwise if a neighborhood of $S$ has a local minimal foliation, then since $M$ has bounded geometry it also has a global minimal foliation \cite{anderson1983cmh}[Section 5]. 

 \end{proof}

\begin{rem} \label{rem:Anderson}
The condition locally area-minimizing is essential here. Using the fact that the function $f(t)=t^3$ vanishes to second order at $0$, it is possible to construct Riemannian manifolds $M$ for which the conclusion of the lemma can fail if $S$ is only assumed to be stable. Furthermore, it can also fail for $M$ hyperbolic and satisfying the assumptions of the lemma \cite{GLP21}.  
\end{rem} 

Now, we are ready to state the main results of this section.

\begin{thm} \label{thm:product-dichotomy} Let $\M$ be a {\dd}{\htm} with bounded geometry. Then either $\M$ contains a closed {\ms} diffeomorphic to the fiber $S$, or there exists a {\dd}manifold with bounded geometry which is foliated by closed minimal surfaces.  
\end{thm}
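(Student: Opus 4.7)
The plan is to assume that the first alternative fails, i.e.\ that there exists some $\M \in \Lambda_\s$ containing no closed embedded minimal surface isotopic to the fiber, and to produce an $\M_0 \in \Lambda_\s$ foliated by minimal surfaces. First, approximate $\M$ by a sequence of quasi-Fuchsian manifolds $\{\M_n\}$ converging geometrically to $\M$, with uniformly bounded injectivity radius on compact sets; such a sequence exists by combining Bers' density theorem with Minsky's bounded geometry theorem for doubly degenerate ends. Each $\M_n$ contains a unique area-minimizing closed embedded minimal surface $\Sigma_n$ in the isotopy class of $\s$, with $\mathrm{Area}(\Sigma_n) \leq 2\pi|\chi(\s)|$ by Gauss--Bonnet applied to the induced metric of curvature $\leq -1$, and a uniform diameter bound by a standard covering argument using the injectivity radius lower bound.

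Choose basepoints $p_n \in \Sigma_n$ and pass to a pointed geometric subsequential limit. If $p_n$ stays in a compact region of $\M$ (under the convergence $\M_n \to \M$), then Schoen's stable curvature estimate together with the uniform area bound yields smooth convergence of a subsequence of $\Sigma_n$ to a closed embedded minimal surface in $\M$ isotopic to $\s$, contradicting our assumption. Therefore $p_n$ escapes every compact set in $\M$, and the pointed limit $(\M_n, p_n) \to (\M_\infty, p_\infty)$ produces some $\M_\infty \in \Lambda_\s$ containing a closed locally area-minimizing surface $\Sigma_\infty \subset \M_\infty$ isotopic to $\s$. Apply~\Cref{lem:Anderson} to $\Sigma_\infty \subset \M_\infty$: if case $(1)$ of that lemma holds, $\M_\infty$ is foliated by minimal surfaces isotopic to $\s$, and we set $\M_0 := \M_\infty$, completing the proof.

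It remains to rule out case $(2)$ of~\Cref{lem:Anderson}, in which $\Sigma_\infty$ admits a mean-convex tubular foliation $\{S_u\}_{|u|<\delta}$ in $\M_\infty$. The key point is that the same sequence $\M_n$ witnesses two geometric convergences simultaneously, $\M_n \to \M$ based at some $o_n$ and $(\M_n, p_n) \to (\M_\infty, p_\infty)$, so that under the approximate isometries $\phi_n \colon B_{R_n}(o) \to \M_n$ (with $R_n \to \infty$) the preimages $q_n := \phi_n^{-1}(p_n) \in \M$ exhibit arbitrarily large regions of $\M$ near $q_n$ that are $C^\infty$-close to the tubular neighborhood $\N$ of $\Sigma_\infty$ in $\M_\infty$. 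Transplanting the mean-convex foliation $\{S_u\}$ from $\N \subset \M_\infty$ to the corresponding region $\N_n \subset \M$ yields surfaces with almost-correct mean curvature sign; a Schauder-type perturbation on sections of the normal bundle upgrades these to genuine mean-convex barriers $S^\pm \subset \M$ cobounding a region diffeomorphic to $\s \times [0,1]$. Then Meeks--Simon--Yau~\cite{meeks1982embedded} produces an area-minimizer in this region isotopic to $\s$, contradicting the standing assumption; this rules out case $(2)$. The main obstacle will be controlling the two simultaneous geometric convergences in tandem and making rigorous the passage from approximate to genuine mean-convexity via perturbation of the normal graph equation; a secondary subtlety is arranging the quasi-Fuchsian approximation with the required uniform injectivity radius control, for which one leans on Minsky's bounded geometry criterion in terms of bounded subsurface projections of the end invariants.
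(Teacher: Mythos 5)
Your overall strategy shares the paper's core structure: obtain a locally area-minimizing surface in some pointed geometric limit manifold, apply \Cref{lem:Anderson} to that surface, and in the mean-convex-foliation case transplant the barriers back to $\M$ and invoke Meeks--Simon--Yau. But the route by which you produce the limit manifold and the surface is quite different from, and ultimately weaker than, the paper's. The paper never introduces a quasi-Fuchsian approximating sequence. It works intrinsically: it defines the harmonic-maps energy functional $E$ on $\mathrm{Teich}(S)$ (whose critical points yield minimal surfaces in $\M$), and if $E$ fails to achieve its infimum it takes pointed geometric limits of $(\M, p_n)$ \emph{inside $\M$ itself}, using the mapping class group action on $\mathrm{Teich}(S)$ to show the minimizing sequence becomes compact in the limit manifold's Teichm\"uller space. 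Because the limit is of $(\M,p_n)$ and $\M$ is doubly degenerate with bounded geometry, the injectivity radius lower bound and the double degeneracy both pass automatically to $\M_\infty$, so $\M_\infty \in \Lambda_\s$ for free.

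Your version has a genuine gap exactly at the point the paper's approach was designed to avoid. You take pointed limits of the quasi-Fuchsian manifolds $(\M_n, p_n)$, where $p_n$ lies on a minimal surface $\Sigma_n \subset \M_n$. You only control $p_n$ by showing it \emph{leaves} every compact region of $\M$ seen through the approximate isometries $\phi_n$; this does not place $p_n$ deep in the region where $\M_n$ is close to $\M$. Nothing rules out $p_n$ drifting toward the boundary of the convex core of $\M_n$, in which case the pointed limit $(\M_\infty, p_\infty)$ has a geometrically finite end (it could be quasi-Fuchsian or singly degenerate), and hence is \emph{not} in $\Lambda_\s$, so you cannot conclude the theorem's second alternative even if \Cref{lem:Anderson}(1) holds for $\Sigma_\infty$. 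Nor is bounded geometry of $\M_\infty$ automatic: your ``uniform injectivity radius on compact sets'' for $\M_n$ is anchored to the basepoints of the convergence $\M_n\to\M$, not to the wandering basepoints $p_n$. The same ambiguity undermines your case-(2) transplantation: you need large balls around $q_n=\phi_n^{-1}(p_n)$ in $\M$ to be $C^\infty$-close to $\M_\infty$, but the available estimate is only a comparison to $\M_n$, which is valid only if $q_n$ stays far from the edge of the domain of $\phi_n$ --- and that is not established. Secondary issues: uniqueness of the area-minimizer in a quasi-Fuchsian manifold is false in general (though not needed --- existence suffices); and the ``Schauder-type perturbation'' at the end is unnecessary, since the $\Phi_n$ converge to isometries in $C^\infty$ and the foliation leaves are \emph{strictly} mean-convex, so for large $n$ the transplanted surfaces are already genuinely mean-convex. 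Finally, producing a quasi-Fuchsian approximation of a bounded-geometry doubly degenerate manifold with the global injectivity-radius control your argument leans on is itself nontrivial (Minsky's characterization is a statement about the ends of $\M$, not a recipe for the approximants); the paper's intrinsic argument sidesteps this entirely.
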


\begin{pf}
Fix a homotopy equivalence $f:S\rightarrow \M$. This defines a functional $E: \text{Teich}(S) \rightarrow \mathbb{R}$ on {\TS}$\text{Teich}(S)$ as follows.  We regard each point $\sigma$ in Teich(S) as a hyperbolic surface $S_\sigma$ marked by a diffeomorphism $\Psi_{\sigma}:S \rightarrow S_{\sigma}$.  Then $E(\sigma)$ is defined to be the energy of the unique harmonic map $S_{\sigma} \rightarrow \M$ in the homotopy class of $f \circ \Psi_{\sigma}^{-1}$. It follows from \cite{minsky1992harmonic}[Theorem 3.6] that $F_{\sigma}$ exists and is unique.  We are going to use the standard fact that the image of a harmonic map corresponding to a critical point of $E$ is a minimal surface in $\M$.

If $E$ achieves its infimum then we are done, so assume this is not the case, and choose a minimizing sequence $\sigma_n$ for $E$ in $\text{Teich}(S)$.  We know that the infimal energy is positive by \cite{minsky1992harmonic}.  For each $\sigma_n$, choose a point $p_n$ in $\M$ contained in the image of the corresponding harmonic map into $\M$.  Since $\M$ has bounded geometry, we can pass to a limit of the pointed hyperbolic manifolds $(\M,p_n)$ to obtain a pointed limit manifold $(\M_\infty,p)$ \cite{mcmullen1996renormalization}.  This means that there are compact sets $K_n$ homeomorphic to $S \times [0,1]$ that exhaust  $(\M_\infty,p)$ and maps $\Phi_n:K_n \rightarrow (\M,p_n)$ that smoothly converge to isometries onto their images. Moreover, it follows from the bounded geometry of $\M$ that $\M_\infty$ is also homeomorphic to $S \times \mathbb{R}$.

Fix a homotopy equivalence $f_{\infty}:S \rightarrow \M_{\infty}$, and recall that we fixed a homotopy equivalence $f: S \rightarrow \M$ earlier.  Then given these markings, each of the maps $\Phi_n$ gives an element $G_n$ of the mapping class group of $S$. For each point $\sigma$ in the {\TS}of $S$, we obtain a point $G_n(\sigma)$ in $\text{Teich}(S)$ that projects to the same point in moduli space as $\sigma$. Here $G_n(\sigma)$ is the point that $G_n$ maps $\sigma$ to for the action of the mapping class group on Teichmuller space.

For each $\sigma \in \text{Teich}(S)$, denote the corresponding harmonic maps $S \rightarrow \M$ and $S \rightarrow \M_\infty$ by $F_{\sigma}$ and $F_{\sigma}^\infty$. Then provided that the energy $E( F_{\sigma})$ is bounded from above by some constant $C$, it will be the case that  $E(F_{G_n(\sigma)}^\infty)$ is bounded from above by 
$ C +\delta(n)$ for $\delta(n)$ tending to zero as $n$ tends to infinity, by the fact that the $\Phi_n$ are smoothly converging to isometries on compact sets. Here we also use the fact that the image of any of the harmonic maps that we are considering has diameter bounded above by a constant $C(\M)$ depending only on $\M$: in \cite{Min93}[Lemma 3.1], it is shown that each harmonic map $S \rightarrow \M$ has a bound on the diameter of its image depending only on the injectivity radius lower bound for $\M$ and the topological type of $\M$.

We claim that for any minimizing sequence $\sigma_n$ for the harmonic maps energy functional on {\TS}for $\M$, the sequence $G_n(\sigma_n)$ stays in a compact subset of $\text{Teich}(S)$.  To see this, assume for contradiction that this fails, and fix a collection $\mathcal{F}$ of filling simple closed curves in $S$. 
By smooth convergence of $M$ with basepoints on the images of the $F_{\sigma_n}$ to $M_{\infty}$ and the diameter upper bound from the previous paragraph, we know that the images of the $F_{G_n \sigma_n}^{\infty}$ are contained in a bounded subset $B$ of $M_{\infty}$. 

Suppose for contradiction that $G_n \sigma_n$ leaves every compact subset of $\text{Teich}(S)$.   We can find a constant $C$ and a sequence of mapping classes $T_n$ so that the length of every element of $T_n \mathcal{F}$ in $G_n \sigma_n$ is bounded by $C$.  By the fact that taking the lengths of elements of $\mathcal{F}$ defines a proper function from Teichmuller space to a Euclidean space of dimension equal to the number of curves in $\mathcal{F}$ (this follows, for example, from \cite{bonahon1988geometry}[Proposition 3]), it will then be the case that for at least one of the homotopy classes in $M_{\infty}$ corresponding to the $T_n \mathcal{F}$, any representative contained in $B$ has length at least $K(n)$, where $K(n)$ tends to infinity as $n \to \infty$.   Up to passing to a subsequence we can assume that this is the case for some fixed $\gamma \in \mathcal{F}$.  Since the $\Phi_n \circ F_{\sigma_n}$ are all contained in $B$ and because the infimal length of a representative of $T_n \gamma$ in $B$ tends to infinity as $n \to \infty$, it follows from (\cite{minsky1992harmonicsurfaces}[Proposition 3.1], \cite{minsky1992harmonic}[Section 5]) that the energies of the $\Phi_n \circ F_{\sigma_n}$ tend to infinity, but this is a contradiction.

Thus using the fact that $\Phi_n$ is converging to an isometry on compact sets, we see that  $G_n(\sigma_n)$ is a minimizing sequence for the energy functional $E$ on $\text{Teich}(S)$ for $\M_\infty$, defined using the homotopy equivalence $f_{\infty}:S \rightarrow \M_{\infty}$ in the same way for $\M_{\infty}$ as for $\M$. Since the $G_n(\sigma_n)$ stay in a compact subset of $\text{Teich}(S)$, we can pass to a subsequential limit to obtain an absolute minimum for $E$ on $\M_\infty$. The image of the {\hmp} corresponding to the absolute minimum is then an embedded essential {\ms} in $\M_{\infty}$ \cite{SY79}.  Note that this {\ms} is not just stable but a local minimizer for area (see e.g. \cite{CalegariMinimalSurfaces}[4.4.1].) This fact will be important for the next step, see Remark \ref{rem:Anderson}. In fact, one can check that this minimal surface is a global minimizer for area in its homotopy class in $\M_{\infty}$, although that won't matter for the argument that follows. 

We now apply \Cref{lem:Anderson} to $\M_\infty$.  We just need to check that if we are in the second alternative of \Cref{lem:Anderson}, then $\M$ has infinitely many minimal surfaces.  In this case there are surfaces  $S_\epsilon$ and $S_{-\epsilon}$  that bound a product region homeomorphic to $S \times (-\epsilon,\epsilon)$  in $\M_{\infty}$ containing $S$, and whose mean-curvature vectors point towards $S$ at every point.  In this case, for large $n$ we have that $\Phi_n(S_{\pm \epsilon})$ bound a product region in $\M$ with mean-convex boundary. Each such product region contains a locally area-minimizing minimal surface by \cite{meeks1982embedded}, which gives infinitely many minimal surfaces in $\M$ because the regions bounded by the $\Phi_n(S_{\pm \epsilon})$ will leave every compact set as $n$ tends to infinity.  Using local min-max theory, we can then produce infinitely many saddle point minimal surfaces as well \cite{KLS19}[Section 6]. 
\end{pf}

\begin{rem} [Gromov's dichotomy vs. our result] Our alternative result, when combined with Gromov's dichotomy~\cite{gromov2014plateau}, gives an interesting relation. In particular, Gromov's dichotomy states that a complete non-compact manifold either contains a finite area minimal surface or there exists a strictly mean convex foliation of every compact subdomain. Assuming no rank-1 cusps in $\M$, \Cref{thm:product-dichotomy} implies that if there is a doubly degenerate manifold $\M$ with bounded geometry which admits a strictly mean-convex foliation (hence no closed minimal surface by Gromov's dichotomy), then there must be another such manifold $\M$ with a minimal foliation.     
\end{rem}

\subsection{Minimal Surfaces in Quasi-Fuchsian Manifolds} \label{sec:QFcount} \
The existence of closed (incompressible) {\ms}s in any {\qfm} was established in the 1970s (\cite{SY79, SU82}). Moreover, there exist {\qfm}s which admit arbitrarily large numbers of stable incompressible {\ms}s(\cite{HW15, huang19complexlength}) (there are always at most finitely many \cite{anderson1983cmh}.) In this subsection, we apply \Cref{thm:product-dichotomy} to obtain results on the growth rate of the number of disjoint {\ms}s in a {\qfm} as a function of the volume of its convex core.

First we prove the following conditional result:
\begin{thm} \label{thm:conditional}
Let $M =\Sigma \times \mathbb{R}$ be an $\epsilon$-thick {\qfm} (without rank-1 cusps), and let $V$ be the volume of its convex core. Then provided that no {\dd} {\htm} admits a minimal foliation, there is a constant $C=C(\epsilon)$ such that any collection of disjoint stable minimal surfaces in $M$ homotopic to $\Sigma$ has at most $C V$ elements.
\end{thm}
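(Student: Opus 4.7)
The plan is a contradiction argument using the dichotomy of \Cref{lem:Anderson} together with a pointed geometric limit. Suppose no such $C(\epsilon)$ exists. Then there is a sequence of $\epsilon$-thick {\qfm}s $M_i = \Sigma \times \mathbb{R}$ with convex core volumes $V_i$ carrying collections of $N_i$ disjoint stable minimal surfaces homotopic to $\Sigma$, with $N_i/V_i \to \infty$. Any closed minimal surface homotopic to $\Sigma$ in a hyperbolic $3$-manifold has area at most $-2\pi\chi(\Sigma)$ (since $K_S \le -1$ by the Gauss equation, together with Gauss-Bonnet), and by a standard barrier argument with the convex core boundary every such stable minimal surface lies in the convex core of $M_i$. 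Ordering the surfaces $S_{i,1}, \dots, S_{i,N_i}$ along the $\mathbb{R}$-factor, the disjoint regions $R_{i,j}$ they cobound satisfy $\sum_j \mathrm{vol}(R_{i,j}) \le V_i$, so pigeonhole applied to $j \in [N_i/3, 2N_i/3]$ yields an index $j_i$ with $\mathrm{vol}(R_{i,j_i}) \le 3V_i/N_i \to 0$.

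Choose basepoints $p_i \in S_{i,j_i}$. By $\epsilon$-thickness and Gromov compactness, after passing to a subsequence we obtain a pointed geometric limit $(M_\infty, p_\infty)$ of bounded geometry. Because $j_i \to \infty$ and $N_i - j_i \to \infty$, arbitrarily many disjoint minimal surfaces escape on both sides of $p_i$ inside $M_i$, which forces both ends of $M_\infty$ to be geometrically infinite; thus $M_\infty$ is a {\dd} {\htm} of bounded geometry. Curvature estimates for stable minimal surfaces \cite{Sch83}, combined with the uniform area and genus bounds, allow $S_{i,j_i}$ and $S_{i,j_i+1}$ to subconverge smoothly on compact sets to stable locally area-minimizing surfaces $S^\pm \subset M_\infty$. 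Since $\mathrm{vol}(R_{i,j_i}) \to 0$, both $S^-$ and $S^+$ pass through $p_\infty$; being limits of disjoint surfaces they are tangent from one side, so the maximum principle forces $S^- = S^+ =: S$. Writing $S_{i,j_i}$ and $S_{i,j_i+1}$ as graphs over $S$ in tubular coordinates (pulled back via the almost-isometries from the geometric convergence) and normalizing their vertical separation yields, in the limit, a nontrivial positive Jacobi field on $S$; hence $S$ is degenerate stable in $M_\infty$.

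Apply \Cref{lem:Anderson} to $S \subset M_\infty$. If the first alternative holds, then $M_\infty$ is foliated by minimal surfaces isotopic to $\Sigma$, directly contradicting the standing hypothesis and completing the proof. The main obstacle is ruling out the second alternative, in which $S$ would be the unique minimal surface in a tubular neighborhood bounded by a mean-convex foliation pointing toward $S$. Using the smooth convergence of the ambient metrics on a large neighborhood of $S$ supplied by the geometric limit, a parametric version of the Anderson-style implicit function theorem argument in the proof of \Cref{lem:Anderson} (\cite{anderson1983cmh}[Section 5]) should show that the uniqueness of the minimal surface near $S$ persists under small smooth perturbations of the ambient metric. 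But each $M_i$ contains two disjoint minimal surfaces $S_{i,j_i}, S_{i,j_i+1}$ arbitrarily close to $S$ in the perturbed metric, yielding the desired contradiction and thereby establishing the linear bound $C(\epsilon) \cdot V$.
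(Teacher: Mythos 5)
Your proof follows the same overall strategy as the paper's — argue by contradiction, pass to a pointed geometric limit, show the limit is a doubly degenerate manifold with bounded geometry, and produce a degenerate stable minimal surface in the limit. You also correctly identify the main difficulty: ruling out the second alternative of \Cref{lem:Anderson}. But the last paragraph of your argument has a genuine gap.

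Your pigeonhole extracts only \emph{two} nearby surfaces $S_{i,j_i}, S_{i,j_i+1}$ with shrinking gap volume, and the degenerate-stability argument uses only these two. The paper's pigeonhole is stronger: it extracts $k(n) \to \infty$ pairwise-disjoint stable surfaces all lying in a region of total volume $\delta(n) \to 0$ (e.g.\ by dividing the $N_n - 1$ gaps into blocks of length $\sim\sqrt{N_n}$ and choosing the block of smallest total volume), so that in the limit \emph{all} of them converge to the same surface $\Sigma$. This is essential, not cosmetic: having only two surfaces tells you $\Sigma$ is degenerate stable, which by itself is not contradictory.

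More seriously, the assertion that in the second alternative of \Cref{lem:Anderson} "the uniqueness of the minimal surface near $S$ persists under small smooth perturbations of the ambient metric" is unsubstantiated and in fact false in general. When $S$ is degenerate stable, the mean curvature of the leaves $S_u$ produced by the implicit function theorem can vanish to high (but finite) order in $u$ — e.g.\ behave like $u^3\phi$ — and a small metric perturbation can split such a surface into several nearby minimal surfaces, exactly the phenomenon \Cref{rem:Anderson} alludes to. This is why you cannot simply apply \Cref{lem:Anderson} as a black box here. The paper instead works with the one-dimensional real-analytic family $\gamma$ and the analytic area functional $A$ on it, transports the $k(n) \to \infty$ critical points of the approximating functionals $A_n$ into $\gamma$ via the smooth convergence of the ambient manifolds, applies iterated Rolle's theorem to show every jet of $A$ vanishes at $\Sigma$, and then concludes from analyticity that $A$ is locally constant — yielding a local (and then, by \Cref{thm:product-dichotomy}-type arguments, global) minimal foliation of $M_\infty$. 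That analyticity-plus-Rolle step, fed by infinitely many accumulating critical points rather than just two, is precisely the ingredient your proposal lacks.
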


\begin{pf} 
We argue by contradiction. Suppose not, and take a sequence of {\qfm}s $M_n$ as above with at least $nV_n$ disjoint stable {\ms}s, where $V_n$ is the volume of the convex core of $M_n$.

Since the injectivity radius of $M_n$ is at least $\epsilon$, the {\sff} of any stable {\ms} in $M_n$ is bounded from above in norm by some constant. For each $M_n$, we can find $k=k(n)$ stable minimal surfaces $\Sigma_n^1,...,\Sigma_n^k$ so that the compact region any two of them bound has volume at most $\delta(n)$, for $\delta(n)$ tending to zero as $n$ tends to infinity and $k$ tending to infinity as $n$ tends to infinity. Because the second fundamental forms of the $\Sigma_n^i$ and $\Sigma_n^j$ are uniformly bounded, this means that $\Sigma_n^i$ and $\Sigma_n^j$ are at a Hausdorff distance from each other tending to zero as $n$ tends to infinity.

Passing to a pointed limit of the $M_n$ with basepoints on the $\Sigma_n^1$, we obtain a {\dd} manifold with bounded geometry in the limit $\M$ containing a minimal surface $\Sigma$ to which the $\Sigma_n^i$ converge. Since the Hausdorff distance between any two $\Sigma_n^j$ goes to $0$ as $n \to \infty$, the limit is the same for any choice of $i=i(n)$. As in the proof of Lemma \ref{lem:Anderson} we can apply the real analytic implicit function theorem for Banach manifolds to the functional $F(w,t) = H_w(exp_x(w(x)\nu(x))) - t\phi(x)$ to obtain a local foliation around $\Sigma$ by closed surfaces of mean curvature proportional to $\phi$. (Recall that $H_w(exp_x(w(x_0)\nu(x_0)))$ is the real valued function on $\Sigma$ defined by taking the inner product of the mean curvature vector of the graphical surface parametrized by $x \mapsto exp_x(w(x)\nu(x))$ at $exp_{x_0}(w(x_0)\nu(x_0))$, with the unit normal vector to that surface at $exp_{x_0}(w(x_0)\nu(x_0))$ pointing towards the same end of $M$ as the normal vector field $\nu$ to $\Sigma$.)    

By smooth convergence of the $\Sigma_n^i$ to $\Sigma$, it follows that we also have foliations around each $\Sigma^i_n$ by closed surfaces of mean curvature a multiple of $\phi^i_n$, the first eigenfunction of the second variation operator of $\Sigma^i_n$ (positive with $L^2$ norm equal to $1$), such that the neighborhoods where the implicit function theorem applies are uniform. This is because the linearized operators of the sequence are close to the linearized operator in the limit, and it follows from the standard contraction mapping proof of the inverse function theorem that the size of the neighborhood to which the inverse function theorem applies depends on the size of the inverse of its linearization.  Let $F^i_n \in C^{\infty}(\Sigma_n^1)$ be the function of which $\Sigma_{n}^i$ is the image under the normal exponential map for $\Sigma_n^1$. Then for $n$ sufficiently large each $\Sigma^i_n$ is a leaf of the foliation associated to $\Sigma^1_n$, since $(F^i_n,0)$ solves $H_w(exp_x(w(x)\nu(x))) - t\phi^i_n(x)=0$ while belonging to the neighborhood where implicit function theorem applies.  

Let $\gamma_n$ (resp. $\gamma$) be the one-dimensional analytic submanifold of $C^{2,\alpha}(\Sigma_n^i)$ (resp. $C^{2,\alpha}(\Sigma)$), homeomorphic to an interval, given by the implicit function theorem.  Define the functions $A_n$ and $A$ on respectively $\gamma_n$ and $\gamma$ by for each point on $\gamma_n$ or $\gamma$ taking the area of the associated surface in respectively $M_n$ or $M$.  Then we have that $\gamma_n$ smoothly converges to $\gamma$ and that $A_n$ smoothly converges to $A$. Here we view the $\gamma_n$ as submanifolds of $C^{2,\alpha}(\Sigma)$ so that this makes sense, by choosing analytic diffeomorphisms between $\Sigma_n^i$ and $\Sigma$ that smoothly converge to the identity and using the corresponding identifications between the $C^{2,\alpha}(\Sigma_n^i)$ and $C^{2,\alpha}(\Sigma)$.

Since $\Sigma^1_n,\ldots,\Sigma^k_n$ correspond to critical points of $A_n$, by iterated applications of Rolle's theorem we have that for any $1\leq i\leq k$ there exists a leaf between $\Sigma^1_n$ and $\Sigma^k_n$ so that the $i$-th jet of $A_n$ vanishes. Moreover, as we have smooth convergence of all $\Sigma^i_n\xrightarrow[n\rightarrow+\infty]{} \Sigma$ for all $i$ , it follows that the leaves for which the $i$-th jet of $A_n$ vanishes accumulate at $\Sigma$, and consequently the $i$-th jet of $A$ vanishes at $\Sigma$ for any $i\geq 1$. Since $A$ is analytic it follows that we have then a local foliation around $\Sigma$ by compact minimal leaves, and such a foliation can be extended to be global as in the proof of Theorem \ref{thm:product-dichotomy}, which leads to a contradiction. 
\end{pf}

We also have the following unconditional result, which follows from an argument very similar to the proof of the last theorem. One uses the fact that hyperbolic 3-manifolds with short geodesics as in the statement of the next theorem cannot admit minimal foliations by \cite{huang19complexlength}, \cite{hass2015minimalfibrationshyperbolic3manifolds} (the latter dealt with closed hyperbolic 3-manifolds that fiber over the circle, but the same arguments apply in our setting.) Thus a geometric limit of quasi-Fuchsian hyperbolic 3-manifolds as in the next theorem cannot admit a minimal foliation.  

\begin{cor} \label{cor:nonconditional}
Suppose $M$ is a {\qfm} satisfying: 

\begin{enumerate}
\item $M$ is $\epsilon$-thick (no rank-1 cusp),
\item There is some $R>0$ so that each ball of radius $R$ in the convex core of $M$ contains a geodesic whose complex length satisfies the conditions in the main theorem of \cite{huang19complexlength}. 
\end{enumerate}
Then there is some constant $C=C(\epsilon,R)$ so that any collection of disjoint stable minimal surfaces in $M$ homotopic to the fiber has at most $C \cdot V$ elements, for $V$ the volume of the convex core of $M$.  
\end{cor}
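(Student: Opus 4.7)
The plan is to mirror the proof of \Cref{thm:conditional}, but replace the unverified hypothesis that doubly degenerate manifolds admit no minimal foliations by an unconditional non-foliation statement valid for any geometric limit of the $M_n$ under hypothesis (2). Arguing by contradiction, I would assume no such constant $C(\epsilon,R)$ exists and take a sequence $M_n$ satisfying (1) and (2) with at least $nV_n$ disjoint stable minimal surfaces homotopic to the fiber. Since each $M_n$ is $\epsilon$-thick and each surface is stable, Schoen's curvature estimates give a uniform bound on the second fundamental form of every such surface. A pigeonhole / interval-bisection argument inside the convex core then extracts, for each $n$, a subcollection $\Sigma_n^1,\dots,\Sigma_n^{k(n)}$ with $k(n)\to\infty$ whose successive pairs cobound product regions of volume $\to 0$; uniform curvature bounds upgrade this to Hausdorff distance $\to 0$, exactly as in the proof of \Cref{thm:conditional}.

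Next, choose basepoints $p_n\in\Sigma_n^1$ and pass to a pointed geometric limit $(M_n,p_n)\to(\M_\infty,p)$. By bounded geometry and the doubly-degenerate-like structure forced by having many nearby essential minimal surfaces, $\M_\infty$ is homeomorphic to $S\times\mathbb{R}$ with bounded geometry, and the $\Sigma_n^i$ smoothly converge to a single locally area-minimizing minimal surface $\Sigma\subset\M_\infty$. The crucial new input is that hypothesis (2) passes to the limit: any ball of radius $R$ in $\M_\infty$ is the smooth limit of balls of radius $R$ in the $M_n$, each of which contains a closed geodesic whose complex length satisfies the hypothesis of the main theorem of \cite{huang19complexlength}; since this condition is open in complex length and geodesics persist under geometric convergence, $\M_\infty$ satisfies the same condition at every basepoint. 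Combining \cite{huang19complexlength} with \cite{hass2015minimalfibrationshyperbolic3manifolds} then shows that no hyperbolic $3$-manifold satisfying (2) admits a foliation by closed minimal surfaces isotopic to the fiber.

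Now I would run the real-analytic argument from the end of the proof of \Cref{thm:conditional}. Applying the implicit function theorem to $F(w,t)=H_x(\exp_x(w(x)\nu(x)))-t\phi(x)$ at $\Sigma$, and at each $\Sigma_n^i$ with its first eigenfunction $\phi_n^i$, produces one-parameter analytic arcs $\gamma$ in $C^{2,\alpha}(\Sigma)$ and $\gamma_n$ in $C^{2,\alpha}(\Sigma_n^i)$ with uniform domains of definition and smooth convergence $\gamma_n\to\gamma$. The associated area functions $A_n$ on $\gamma_n$ have at least $k(n)$ critical points accumulating at $\Sigma$, so iterated Rolle's theorem forces every jet of the limit function $A$ on $\gamma$ to vanish at $\Sigma$. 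Analyticity of $A$ then gives a local minimal foliation of a neighborhood of $\Sigma$, and the Anderson-type extension used in the proof of \Cref{thm:product-dichotomy} promotes this to a global minimal foliation of $\M_\infty$. This directly contradicts the previous paragraph and completes the argument.

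The main obstacle I anticipate is making rigorous the claim that hypothesis (2) survives passage to the geometric limit in a form strong enough to invoke \cite{huang19complexlength}: one must ensure that for any $q\in\M_\infty$ the radius-$R$ ball around $q$ really contains a \emph{closed} geodesic of $\M_\infty$ (not merely a long arc) whose complex length lies in the prescribed region. This should follow from the smooth convergence of the hyperbolic structures on compact sets, the openness of the complex-length condition, and persistence of closed geodesics of bounded length under geometric convergence; the rest of the argument is a close parallel of \Cref{thm:conditional}.
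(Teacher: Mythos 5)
Your proposal matches the paper's intended argument: the paper's proof is a one-sentence remark indicating that one should run the proof of \Cref{thm:conditional} and replace the conditional non-foliation hypothesis by the observation that hypothesis (2) passes to the geometric limit, so the limit cannot admit a minimal foliation by \cite{huang19complexlength} and \cite{hass2015minimalfibrationshyperbolic3manifolds}. You have expanded exactly this sketch, including correctly flagging the persistence-of-short-geodesics point as the step needing care.
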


While the above condition seems restrictive, we give next an explicit construction of {\qfm}s as in the corollary.

\subsection{Quasi-Fuchsian Manifolds with Short Geodesics} \label{sec:shortgeodesics} 

In this part, we construct examples of {\qfm}s satisfying the hypotheses of Corollary \ref{cor:nonconditional}.  

\begin{thm} There are quasi-Fuchsian manifolds satisfying the conditions of \Cref{cor:nonconditional}.
\end{thm}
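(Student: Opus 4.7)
The plan is to realize the desired \qfm s as deep quasi-Fuchsian approximations of a \dd hyperbolic $3$-manifold obtained as the infinite cyclic cover of a closed fibered hyperbolic $3$-manifold carrying a closed geodesic of prescribed complex length.

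First, I would produce a closed hyperbolic fibered $3$-manifold $N$ with fiber $\Sigma$ of genus $\geq 2$ that contains a closed geodesic $\gamma_0$ whose complex length satisfies the hypothesis of the main theorem of \cite{huang19complexlength}. Such an $N$ arises from Thurston's hyperbolic Dehn surgery theorem: starting with a one-cusped hyperbolic fibered $3$-manifold and Dehn filling the cusp, all but finitely many fillings yield a closed fibered manifold whose monodromy remains pseudo-Anosov, and the complex length of the core of the filled solid torus can be prescribed as short as required. Passing to the infinite cyclic cover $\wt N\to N$ associated to the fibration $N\to S^1$ then yields a \dd hyperbolic manifold $\wt N\cong \Sigma\times\R$ of bounded geometry. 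The preimage of $\gamma_0$ in $\wt N$ is a $\BZ$-invariant family $\{\gamma_k\}_{k\in\BZ}$ of closed geodesics, each isometric to $\gamma_0$, distributed periodically along the $\R$-factor with pairwise spacing bounded by the diameter of $N$.

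Next, by Bers density combined with the double limit theorem, $\wt N$ lies in the closure of a Bers slice and can be realized as the geometric limit of a sequence of quasi-Fuchsian manifolds $\{M_n\}$ with fiber $\Sigma$, with basepoints chosen in the middle of the convex core. Geometric convergence provides $(1+\epsilon_n)$-bilipschitz identifications, $\epsilon_n\to 0$, of larger and larger central compact pieces of the convex core $\cc_{M_n}$ onto compact subsets of $\wt N$. In particular each $\gamma_k$ is transported to a closed geodesic $\gamma_k^n\subset \cc_{M_n}$ whose complex length converges to that of $\gamma_0$; since the hypothesis of \cite{huang19complexlength} is open in the complex length, the $\gamma_k^n$ inherit it for all $n$ sufficiently large. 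The bounded geometry of $\wt N$ together with geometric convergence also yields a uniform injectivity radius bound $\epsilon$ for $M_n$, verifying the thickness assumption. Finally, taking $R$ larger than the period of the $\BZ$-action on $\wt N$ plus an upper bound on the depth of the collar between the thick part of $\cc_{M_n}$ and $\partial \cc_{M_n}$ guarantees that every $R$-ball in $\cc_{M_n}$ meets the central region where $M_n$ is almost isometric to $\wt N$, and therefore contains one of the geodesics $\gamma_k^n$.

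The principal obstacle will be controlling the geometry uniformly all the way out to $\partial \cc_{M_n}$: geometric convergence is strongest in the middle of the convex core, but near its boundary $M_n$ more nearly resembles a geometrically finite end, where the $\gamma_k^n$ need not directly appear. This is overcome by selecting $R$ strictly larger than the uniform depth of this boundary collar, which is itself bounded once $\epsilon$-thickness and bounded conformal-boundary geometry of the approximating sequence are established (the $M_n$ can be chosen inside a bounded subset of the Bers slice, since geometric convergence at the central basepoint imposes uniform control on the approximating end invariants). A secondary technical point is continuity of complex length under geometric limits, so that the $\gamma_k^n$ genuinely are closed geodesics of $M_n$ whose complex lengths converge to that of $\gamma_0$; this follows from standard implicit function theorem arguments for hyperbolic structures.
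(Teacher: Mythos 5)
Your construction of the $M_n$ is essentially the same as the paper's (iterate a pseudo-Anosov $\Phi$ whose mapping torus has the required short geodesics, and appeal to Thurston's double limit theorem to get a doubly degenerate $\mathbb{Z}$-periodic geometric limit). The paper obtains such a $\Phi$ directly from the constructions of Hass and Huang--Wang rather than via a separate Dehn-filling argument, but that is a superficial difference.

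The genuine gap is in your verification of hypothesis (2) of Corollary~\ref{cor:nonconditional}. You argue directly that every ball of radius $R$ in $\cc_{M_n}$ meets the ``central region'' where $M_n$ is nearly isometric to $\widetilde N$, taking $R$ larger than the period plus a uniform bound on the depth of the boundary collar of $\cc_{M_n}$. This is not justified. Pointed geometric convergence controls the geometry near the chosen central basepoint on a compact set whose size grows with $n$, but the convex core of $M_n$ is also growing with $n$ and most of it need not be well-approximated by $\widetilde N$ under the chosen identifications. A ball centered at a point deep in the convex core but far from the basepoint is not covered by your argument. The auxiliary claim that the $M_n$ ``can be chosen inside a bounded subset of the Bers slice'' is also problematic: the $M_n$ must escape to infinity in the quasi-Fuchsian deformation space to converge geometrically to a doubly degenerate manifold, so no such bounded choice exists; and the doubly degenerate limit does not lie in the closure of a single Bers slice in the first place, since boundary points of a Bers slice have a geometrically finite end.

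The paper handles exactly this difficulty by a compactness-and-contradiction argument: if the conclusion fails, there are points $p_n \in \cc_{M_n}$ with no suitable short loop in $B(p_n, R_n)$, $R_n \to \infty$; re-base the pointed geometric limit at the $p_n$. The subsequential limit is homeomorphic to $\Sigma\times\R$, and is either doubly degenerate (hence the $\mathbb{Z}$-cover of the fibered manifold, which carries the required short geodesics everywhere — contradiction) or has exactly one degenerate end (when the $p_n$ stay at bounded distance from $\partial\cc_{M_n}$), in which case the degenerate end is asymptotic to the $\mathbb{Z}$-cover and again contains the required geodesics far enough out — contradiction. To repair your proof you would need to either (a) replace the ``collar depth'' heuristic with the re-based pointed limit argument, explicitly treating the singly degenerate case, or (b) establish a genuine global bilipschitz model for $\cc_{M_n}$ showing it is built from approximately periodic blocks all the way to $\partial\cc_{M_n}$; option (a) is both simpler and what the paper does.
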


\begin{proof} Choose an element of the mapping class group $\Phi$ that is the monodromy for one of the fibered hyperbolic 3-manifolds constructed by Huang-Wang \cite{huang19complexlength} or Hass \cite{hass2015minimalfibrationshyperbolic3manifolds} that does not admit foliations by minimal surfaces. Then a sequence of quasi-Fuchsian manifolds $M_n$ obtained by applying $\Phi^n$ and $\Phi^{-n}$ to the conformal infinities of some fixed quasi-Fuchsian manifold will give a sequence of {\qfm}s satisfying the assumptions of the previous corollary. \rev{For an appropriate choice of basepoints the sequence $M_n$ converges algebraically to a doubly degenerate $\mathbb{Z}$-periodic limit that is a $\mathbb{Z}$-cover of the hyperbolic 3-manifolds that fiber over the circle constructed by Hass and Huang-Wang, and moreover such convergence is strong (see for instance \cite[Theorem 4.1, Theorem 7.2]{thurstondoublelimit}).}

 To see that all but finitely many of the $M_n$ satisfy the hypotheses of the corollary, assume they did not, and take a sequence of points $p_n \in M_n$ contained in the convex core so that the ball of radius $R_n$ centered at $p_n$ contains no short loop satisfying condition (2) in Corollary \ref{cor:nonconditional} for some $\delta>0$ and $R_n$ tending to infinity as $n \to \infty$.

Then passing to a pointed limit of the $M_n$ we obtain a limit hyperbolic 3-manifold $\M$ which is homeomorphic to $\Sigma \times \mathbb{R}$ and arguments similar to Section \ref{sec:Schottky} show that it must have either one or two degenerate ends, depending on whether the $p_n$ stay at bounded distance from the boundary of the convex core. First assume the pointed limit is doubly degenerate. Then $\M$ must be a $\mathcal{Z}$-cover of the fibered hyperbolic 3-manifolds constructed in \cite{huang19complexlength}, which contain loops satisfying (2) as above for a contradiction. Second, if $\M$ has only one degenerate end, then this end must be asymptotic to a $\mathcal{Z}$-cover of the fibered hyperbolic 3-manifolds constructed in \cite{huang19complexlength}, and so far enough into the end there are closed curves satisfying (2) which gives a contradiction in this case too.  
\end{proof}

\begin{cor}\label{cor:dichotomyQF}
    A quasi-Fuchsian manifold (without rank-1 cusps) satisfies one of the following statements:
    \begin{enumerate}
        \item\label{QF:unique} Contains a unique closed local area minimizer isotopic to the fiber.
        \item\label{QF:infinite} Contains infinitely many saddle point minimal surfaces.
    \end{enumerate}
\end{cor}
\begin{proof}
    Assume that $M=\Sigma\times\mathbb{R}$ is a quasi-Fuchsian manifold with more than one closed local area minimizer \rev{isotopic to the fiber}. Without loss of generality, we can assume these local area minimizers $\Sigma, \Sigma'$ are disjoint. Indeed, if they intersect, \rev{for each unbounded component of their complement we can consider its intersection with the convex core and minimize area within this region for surfaces isotopic to the fiber, as each of these regions has boundary that is piecewise mean convex with dihedral angle less than $\pi$. This produces a disjoint pair of local area minimizers isotopic to the fiber, as desired.} By Corollary \ref{cor:zero-infinity} it is sufficient to prove that there exists a saddle point minimal surface. Let $\Omega$ be the compact region bounded by $\Sigma, \Sigma'$ and take $\Sigma=\Sigma_1,\ldots,\Sigma_n=\Sigma'$ a maximal collection of stable minimal surfaces isotopic to $\Sigma\times\lbrace 0\rbrace$ in the region $\Omega$, so that each consecutive pair bounds a cylinder disjoint from any of the other surfaces. Without loss of generality assume that all surfaces in this maximal collection are not saddle. Then at least one consecutive pair bounds a cylinder $C$ that satisfies the conditions of \cite[Proposition 7.6]{GLP21}, \rev{namely that there are no stable minimal surfaces isotopic to the fiber in the interior of $C$ and there is a foliation near $\partial C$ with non-zero mean curvature vector pointing towards the boundary. The first condition follows by the maximality of the collection $\Sigma=\Sigma_1,\ldots,\Sigma_n=\Sigma'$ for any consecutive pair. As we have foliations near $\Sigma, \Sigma'$ with non-zero mean curvature vector pointing towards each of them, we must have at least one consecutive pair satisfying the second condition of \cite[Proposition 7.6]{GLP21}. Hence as desired it follows the existence of a saddle point minimal surface in any such cylinder $C$.}
\end{proof}

\begin{rem}
    There are examples of either case in Corollary \ref{cor:dichotomyQF}. As mentioned before, almost Fuchsian manifolds (\cite{uhlenbeck1983closed, HW13}) satisfy condition (\ref{QF:unique}). For examples satisfying (\ref{QF:infinite}) one can take the manifolds from \cite{huang19complexlength}. Alternatively,  it is known that there exist quasi-Fuchsian manifolds with at least one degenerate stable minimal surface homotopic to the fiber (see for instance \cite[Theorem 1.6]{GLP21} where that is shown to occur for outermost minimal surfaces). By work of Uhlenbeck (\cite[Theorem 4.4, Corollary 5.2]{uhlenbeck1983closed}) we can deform such quasi-Fuchsian manifold to quasi-Fuchsian manifold with a strictly unstable minimal surface homotopic to the fiber. Such quasi-Fuchsian manifolds satisfy (\ref{QF:infinite}) by Corollary \ref{cor:zero-infinity}.
\end{rem}

\section{Schottky Manifolds} \label{sec:Schottky}

As previously noted, it remains an open question whether a Schottky manifold contains a closed, embedded minimal surface. In this section, we present the first examples of such manifolds.

We emphasize that by a \emph{Schottky manifold} we refer to a handlebody with a convex co-compact (in particular complete) hyperbolic metric.

\begin{thm} \label{thm:Schottky-number}
    Let $C$ be a positive integer and $H_g$ the topological handlebody obtained by attaching $g\geq2$ $1$-handles to the unit ball in $\mathbb{R}^3$. Then the set of Schottky manifolds homeomorphic to $H_g$ that contain at least $C$ local minimizers of area isotopic to their boundary is infinite. Moreover, there exists a geometrically infinite complete hyperbolic metric in $H_g$ with infinitely many local minimizers of area that are isotopic to the boundary.
\end{thm}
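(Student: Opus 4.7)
The plan is to realize Schottky manifolds as strong geometric approximations of doubly degenerate $\Sigma\times\mathbb{R}$ hyperbolic manifolds with bounded geometry containing many disjoint stable minimal surfaces, and then to transfer these surfaces back to the Schottky handlebodies via an implicit function theorem argument. The starting point is to exploit the construction used in \Cref{sec:shortgeodesics}: choose a pseudo-Anosov $\phi$ on a closed genus-$g$ surface $\Sigma$ belonging to the class considered in \cite{huang19complexlength}, and arrange it so that the infinite cyclic cover $\widetilde{M}_\phi\cong\Sigma\times\mathbb{R}$ of its mapping torus is a bounded-geometry \dd manifold containing at least $C$ pairwise disjoint stable closed minimal surfaces isotopic to the fiber, as provided by Huang-Wang.

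Next, I would construct a sequence of Schottky structures $S_n$ on a genus-$g$ handlebody $H$ with $\partial H\cong\Sigma$, whose marked conformal boundaries are obtained by iterating $\phi$ on a fixed Riemann surface structure $X\in\text{Teich}(\Sigma)$. With basepoints $p_n\in S_n$ placed in the middle of the convex core, I expect strong geometric convergence $(S_n,p_n)\to\widetilde{M}_\phi$: as $n\to\infty$ the conformal boundary recedes to infinity on one side of $p_n$ via the Teichm\"uller-theoretic degeneration of $\phi^n\cdot X$, while on the other side the compressing disks of $H$ develop long hyperbolic representatives and are pushed off to infinity. Thurston's Double Limit Theorem combined with the Ending Lamination Theorem should identify the limit as exactly the doubly degenerate $\widetilde{M}_\phi$, rather than a singly degenerate handlebody or some other partial limit.

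With geometric convergence in hand, the persistence argument from the proof of \Cref{thm:conditional} applies: the real-analytic implicit function theorem for the functional
\[
G(w,t)=H_x(\exp_x(w(x)\nu(x)))-t\psi(x),
\]
where $\psi$ is the positive first Jacobi eigenfunction of a stable minimal surface $F\subset\widetilde{M}_\phi$, produces a nearby embedded stable minimal surface $F_n\subset S_n$ for every sufficiently large $n$. Applying this simultaneously to all $C$ minimal surfaces of $\widetilde{M}_\phi$ and using smooth convergence to preserve disjointness gives $C$ disjoint stable minimal surfaces in each $S_n$, all isotopic to $\partial H$ via the near-isometric inclusion of a large compact subset of $\widetilde{M}_\phi$ into $S_n$. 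Since distinct $n$ produce pairwise non-isometric Schottky manifolds (distinguished, for instance, by complex length spectra or by conformal boundaries), this yields infinitely many Schottky manifolds with at least $C$ local area minimizers isotopic to the boundary. For the geometrically infinite handlebody, I would take a further geometric limit of the $S_n$ with basepoints chosen on the handlebody-core side of the convex core so that at least one compressing disk of $H$ survives in the limit but the conformal-boundary side becomes simply degenerate; letting $C=C_n\to\infty$, the limit is a geometrically infinite hyperbolic handlebody containing infinitely many disjoint stable minimal surfaces isotopic to $\partial H$.

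The principal technical obstacle is identifying the geometric limit of $(S_n,p_n)$ as exactly the doubly degenerate $\widetilde{M}_\phi$ rather than a partial limit in which only one side degenerates. This requires a careful choice of the pseudo-Anosov $\phi$, in particular arranging that its action does not preserve the disk set of $H$ so that the compressing disks genuinely recede from the basepoint, together with an application of the Double Limit Theorem and the Ending Lamination Theorem to pin down both end invariants of the limit. A secondary subtlety is verifying that the isotopy class of the persisting minimal surfaces in $S_n$ matches that of $\partial H$, which should follow once one checks that the near-isometric embedding carries the fiber class of $\widetilde{M}_\phi$ to a surface isotopic to $\partial H$ in $S_n$.
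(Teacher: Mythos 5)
Your proposal identifies several of the right ingredients — iterating a pseudo-Anosov to degenerate a family of Schottky structures, extracting a limit involving the $\mathbb{Z}$-cover of a mapping torus with a mean-convex foliation around its fiber minimal surface, and transferring back by geometric convergence — but the convergence argument at the heart of your plan is not set up correctly for handlebodies, and this is where the paper's route differs substantially.

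First, Thurston's Double Limit Theorem is a statement about representations of surface groups (quasi-Fuchsian manifolds); it does not directly apply to Schottky/handlebody representations, whose fundamental group is free and whose end invariant lives in the Masur domain rather than in $\mathrm{Teich}(\Sigma)$. The paper instead invokes Namazi's thesis (\cite[Corollary 1.4, Chapter 5]{NamaziThesis}) to obtain algebraic convergence of the marked convex co-compact handlebodies $N_i$ with conformal boundary $\varphi^i(\tau)$ to a geometrically infinite handlebody $N_\infty$, and then upgrades algebraic to strong (geometric) convergence via Canary's Covering Theorem together with a Jorgensen-inequality argument. You cannot skip this intermediate singly degenerate limit $N_\infty$: the Schottky manifold has only one end, and the correct statement is that $N_i \to N_\infty$ strongly, and the degenerate end of $N_\infty$ is quasi-isometric (by the Ending Lamination Theorem) to the $\mathbb{Z}$-cover $\widetilde{M}_\varphi$ of the mapping torus. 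Your claim of a direct limit $(S_n, p_n) \to \widetilde{M}_\varphi$ with $p_n$ "in the middle of the convex core" is essentially a double limit of this two-stage picture; it may be salvageable with care, but as stated it leaves the main obstacle you yourself flag (ruling out a singly degenerate limit) unresolved, and the tools you cite do not address it. Relatedly, the description of compressing disks as "developing long hyperbolic representatives" does not make sense: boundary curves of compressing disks are null-homotopic in the handlebody and have no geodesic representative.

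Second, the transfer mechanism differs. The paper does not transport minimal surfaces via the implicit function theorem. It instead applies \Cref{lem:Anderson} to the fiber minimal surface of the closed mapping torus $M_\varphi$ (chosen, via a high Dehn filling following \cite{hass2015minimalfibrationshyperbolic3manifolds} and \cite[Theorem 2.5]{farre2022minimal}, so that $M_\varphi$ admits no minimal foliation, ensuring one is in the mean-convex alternative of the dichotomy), lifts the resulting mean-convex collar to $\widetilde{M}_\varphi$, sees it periodically in the end of $N_\infty$, and then transfers those mean-convex product regions to $N_i$ for large $i$ by geometric convergence. Area minimization inside each transported mean-convex region produces the desired local minimizers isotopic to $\partial N$. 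This barrier-plus-minimization route is more robust than a direct implicit function theorem perturbation of the limit minimal surface, since it does not require any nondegeneracy of the limiting surface and automatically yields the isotopy class of $\partial N$. Your secondary worry about isotopy class matching is thus circumvented in the paper's argument, because the surfaces are produced inside product regions whose boundary is explicitly isotopic to $\partial N$. Finally, the geometrically infinite handlebody with infinitely many minimal surfaces is precisely $N_\infty$ itself in the paper, which already carries the construction; your "further geometric limit" step is unnecessary once the strong convergence $N_i\to N_\infty$ is established carefully.
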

\begin{pf}
For the construction we follow \cite[Corollary 1.4, Chapter 5]{NamaziThesis}. Namely, for a marked convex co-compact hyperbolic metric on a handlebody $N$ with conformal structure at infinity $\tau$ consider pseudo-Anosov homeomorphism $\varphi$ of $\partial N$ with attracting lamination $\lambda$, a filling lamination in the \textit{Masur domain}, or the set of ending laminations with non-zero intersection with the compressible curves of $N$. This can be accomplished following \cite[Section 6]{ThurstonPA} by taking $\alpha, \beta$ a filling pair of simple closed curves in $\partial N$ so that $\alpha$ is disk-busting and taking $\varphi_n = T^{n}_\alpha\circ T_\beta$, where $T_{\alpha, \beta}$ are the Dehn twists along the respective curve and $n$ is sufficiently large. As $n$ grows the attracting measured foliation converges to the geometric intersection with $\alpha$, hence $\varphi^n$ will be in the Masur domain for large $n$.

Fixing a particular $\varphi=\varphi_n$, it follows from \cite[Corollary 1.4, Chapter 5]{NamaziThesis} that the sequence $N_i$ of marked convex co-compact metrics in $N$ with conformal structure at infinity $\tau_i=\varphi^i(\tau)$ converges algebraically to $N_\infty$, a geometrically infinite metric in $N$ with ending lamination equal to $\lambda$. By Canary's Covering Theorem (\cite{Canary96}) the algebraic limit $N_\infty$ is a finite index subgroup in any geometric limit $N_{\rm geo}$, which implies that $N_\infty$ is the geometric (and hence strong) limit of the sequence $N_i$. Indeed, let $\gamma \in \pi_1(N_{\rm geo})$. Since $[\pi_1(N_{\rm geo}) : \pi_1(N_\infty)]<\infty$ then there exists $k$ such that $\gamma^k \in \pi_1(N_\infty)$. If $\rho_i:\pi_1(N)\rightarrow PSL_2(\mathbb{C})$ are the discrete faithful representations of $N_i$, then there exist $h,g_i\in \pi_1(N)$ so that $\gamma = \lim_{i\rightarrow\infty} \rho_i(g_i)$ and $\gamma^k = \lim_{i\rightarrow\infty} \rho_i(h)$. Hence $id = \lim_{i\rightarrow\infty} \rho_i(g_i^kh^{-1})$, so by the discreteness of $\rho_i$ it follows that $g_i^k = h$ for $i$ sufficiently large. Indeed, if by way of contradiction we have $g_i^kh^{-1}\neq id$ for infinitely many indices, we can take each $A_k=g_i^kh^{-1}$ as one of the generators of a non-elementary 2 generator group (by choosing any other group element \rev{$B_k$} with different fixed points at infinity for the other generator),  which would violate Jorgensen's inequality. (Recall that Jorgenson's inequality states that for $A$ and $B$ generators of a nonelementary Kleinian group, 
\[
|\text{Tr}(A)^2 - 4 | + |\text{Tr}(ABA^{-1}B^{-1} - 2| \geq 1, 
\]
\rev{which cannot hold for a pair of convergent generators $A_k,B_k$ so that $A_k\rightarrow id$, as the left-hand side would converge to $0$}.)

\noindent In particular, the sequence $(g_i)$ is constant for $i$ sufficiently large, which implies $\gamma\in\pi_1(N_\infty)$.

By the Ending Lamination Theorem (\cite{BCM12EL,BowditchEL}), we have that in the end of $N_\infty$ the translation defined by $\varphi$ is a quasi-isometry. Hence $\varphi$ gives a $\mathbb{Z}$-action on the limit surface group obtained by exiting the end of $N_\infty$. Such surface group will be then the cyclic cover of the hyperbolic mapping torus $M_\varphi$ induced by $\varphi$. Hence, taking $\varphi$ so that $M_\varphi$ does not foliate by minimal surfaces (taking it for instance so that $M_\varphi$ is a sufficiently high Dehn-filling of a non-compact, finite volume hyperbolic manifold following \cite{hass2015minimalfibrationshyperbolic3manifolds}, \cite[Theorem 2.5]{farre2022minimal}, which can be accomplished by taking $n$ sufficiently large in $\varphi=\varphi_n$) then an area minimizer $\Sigma$ among surfaces isotopic to the fiber of $M_\varphi$ has a strictly mean convex foliation around it by \Cref{lem:Anderson}. 
From here the argument follows by \cite[Corollary 5.2]{farre2022minimal}. By geometric convergence, we have infinitely many regions exiting the end of $N_\infty$ that are diffeomorphic to $\partial N\times[0,1]$ with boundary isotopic to $\partial N$ and with mean convex boundary. Thus, we can find area minimizers within these regions to produce minimal surfaces in $N_\infty$ isotopic to $\partial N$.

For any $C>0$ we can consider a compact set $K\subseteq N_\infty$ that contains at least $C$ of the regions with mean convex boundary. Applying again geometric convergence, we have that for sufficiently large $i$ the manifold $N_i$ has at least $C$ regions that are diffeomorphic to $\partial N\times[0,1]$ with boundary isotopic to $\partial N$ and with mean convex boundary. Then by the same argument as before, such $N_i$'s will have at least $C$ local minimizers of area that are isotopic to the boundary.
\end{pf}

Hence, by a direct application of Lemma \ref{lem:dichotomy2}~(\cite[Theorem 1.2]{song2023dichotomy}), we establish the remainder of \Cref{thm:intro-Schottky}, namely that the Schottky manifold (and more generally, bounded geometry handlebodies with at least two boundary isotopic local area minimizers) we constructed with a compact local area minimizer must have then infinitely many saddle point minimal surfaces. 

\begin{cor} \label{cor:Schottky-infty} For any $3$-dimensional handlebody with at least two $1$-handles there exists a complete hyperbolic metric of bounded geometry containing infinitely many closed, embedded, saddle point minimal surfaces. Moreover such hyperbolic manifolds can be taken to be Schottky manifolds.
\end{cor}
\begin{proof}    By using for instance the examples from~\Cref{thm:Schottky-number}, take any bounded geometry handlebody with two disjoint local area minimizers isotopic to the boundary. Applying the reasoning of \cite[Proposition 7.6]{GLP21} as in Corollary \ref{cor:dichotomyQF}, by the local min-max theorem \cite[Section 6]{KLS19} we have a saddle point minimal surface in between. Hence the result follows once more by applying~\Cref{cor:zero-infinity}.
\end{proof}

\begin{rem} [Schottky Manifolds with no minimal surfaces] \label{rem:schottky-no} We note that there are trivial examples of Schottky manifolds with no embedded, complete, finite area minimal surfaces, e.g. $\BH^3$, and Fuchsian Schottky Manifolds~\cite[Lemma 4]{coskunuzer2020minimal}. Note that the nonexistence of non-compact, finite area minimal surfaces in these manifolds follows from~\Cref{lem:thickness}.
\end{rem}

\section{Hyperbolic $3$-Manifolds with Rank One Cusps} \label{sec:rank1cusps}

In~\cite{coskunuzer2020minimal}, the existence of closed minimal surfaces in hyperbolic 3-manifolds was established, excluding the case of rank-1 cusps (\Cref{lem:existence}). In this section, we study this case.

In the following, relatively incompressible refers to the incompressibility of a map $f:(S,\partial S)\rightarrow (M,P)$ where $S$ is a surface with boundary and $P$ is the union of the annuli corresponding to rank-1 cusps linking geometric ends together in the topological ends of a tame hyperbolic $3$-manifold.

\begin{defn}
    We say a geometric end $\E$ of a tame hyperbolic manifold $\M$ is \emph{essential after compressions} if after performing some disk compressions (possibly none) on $\E$ we produce at least one connected component that is relatively incompressible but it is not homotopic to any geometric end other than possibly $\E$.
\end{defn}

As an example, if $\M$ is neither a product nor a compression body then any end is essential after compressions. Moreover, if $\M$ is not a product then any relatively incompressible end is essential after compressions. Theorem \ref{thm:intro-rank1} is a consequence of the next theorem.  

\begin{thm}\label{thm:existence_1+geofiniteend}
    Let $\M = \mathbb{H}^3/\Gamma$ be an infinite volume complete hyperbolic $3$-manifold with rank-1 cusps that is homeomorphic to the interior of a compact manifold. If $\M$ contains a geometrically finite geometric end $\E$ that is essential after compressions, then $\M$ contains a finite area, complete, embedded, stable minimal surface $\s$.
\end{thm}

\begin{rem}
 In the case of quasi-Fuchsian hyperbolic manifolds homeomorphic to the product of a cusped surface with $\mathbb{R}$, the paper \cite{HLS23} proved existence of complete embedded finite area minimal surface. Such manifolds satisfy the assumptions of the first sentence of the theorem but not the second.  
\end{rem}
\begin{pf}
    Let $\Sigma$ be the boundary of the convex core of $\M$ in $\E$. The main idea is to find an area minimizer starting with $\Sigma$ and using $\Sigma$ as a barrier, where the topological assumption is to prevent minimizing sequences from escaping to infinity. As the results of \cite{meeks1982embedded} require a closed surface, we will modify and double the metric of $\M$ to obtain a sequence of manifolds $\widehat{\M}_n$ with area minimizers $\s_n$ and so produce the desired minimal surface $\s$ as a limit of $\s_n\in \widehat{\M}_n$.
    
    Let $\{\U_i\}_{i=1}^m$ be the rank-1 cusp neighborhoods for all of the cusp ends of $\Sigma$. Without loss of generality we take $\{\U_i\}_{i=1}^m$ so that $\{\Sigma\cap\U_i\}_{i=1}^m$ are all totally geodesic components (see \cite[Chapter 4]{canary2006notes}).  We claim that the pleating locus on $\Sigma$ is disjoint from $\cup_{i=1}^m \U_i$.  Indeed, as $\Sigma$ is a finite area pleated surface we can take cusp neighborhoods so that every $2$ dimensional totally geodesic stratum close to the cusp accumulates into it. In the universal cover, each such stratum lifts to a totally geodesic region $R$ with totally geodesic boundary so that the boundary at infinity $\partial_\infty R$ contains a rank-1 parabolic fixed point $p_0$ of $\Gamma$. As $R$ is given by a supporting plane of the convex hull of $\Gamma$ for any of its points, it must be invariant by the rank-1  parabolic subgroup of $\Gamma$ that fixes $p_0$. Moreover any such region $R$ must belong to the same invariant totally geodesic plane, from which the claim follows.

    For each $\U_i$ take a parametrization by horocylinders $\mathbb{S}^1\times \mathbb{R} \times\mathbb{R}^+$ (where every slice $S^1\times \mathbb{R} \times \lbrace t\rbrace$ is the quotient of an horoball by the corresponding rank-1 parabolic subgroup of $\Gamma$). Let $\U^n_i \subset \U_i$ given by $\mathbb{S}^1\times \mathbb{R} \times]n,+\infty[$ and let $\M_n : = \M \setminus \left( \cup_{i=1}^m \U^n_i \right)$. Let $g$ be the hyperbolic metric in $\M$ and consider a conformal change of metric $\Tilde{g}_n = e^{2u_n}g$ in $\M_n$ with $u_n$ satisfying the following properties
    \begin{enumerate}
        \item $u_n$ is a smooth function supported in $\mathbb{S}^1 \times\mathbb{R} \times[n-1,n+1]$.
        \item In $\mathbb{S}^1 \times\mathbb{R} \times[n-1,n]$ the level sets of $u_n$ are given by horocylinders $S^1\times \mathbb{R} \times \lbrace t\rbrace$.
        \item $\partial_t u_n \leq 1$ for $t
        \in [n-1,n]$, with equality at $\mathbb{S}^1\times \mathbb{R} \times \lbrace n\rbrace$.
    \end{enumerate}
    It is straightforward to verify that such conformal changes exist.
    
    The mean curvature of a surface under a conformal change of metric is given by
    \[
    H_{\tilde{g}} = e^{-u}\left( H_g - D_nu\right),
    \]
    where $n$ is the normal of a surface at a given point. Since by construction we have that $\Sigma$ is orthogonal to the level sets of $u_n$, it follows that $\Sigma$ is mean convex with respect to $\tilde{g}_n$.

    By the third condition on $u_n$ we have that $\lbrace S^1\times \mathbb{R} \times \lbrace t\rbrace \rbrace_{t\in[n-1,n]}$ is a mean concave foliation with $S^1\times \mathbb{R} \times \lbrace n\rbrace$ minimal (i.e., the mean curvature vectors of the $\lbrace S^1\times \mathbb{R} \times \lbrace t\rbrace \rbrace_{t\in[n-1,n)}$ point towards $S^1\times \mathbb{R} \times \lbrace n\rbrace$.) Moreover, as one can check that under our assumptions each horocylinder stays umbilic, it follows that $S^1\times \mathbb{R} \times \lbrace n\rbrace$ is in fact totally geodesic in $\Tilde{g}_n$.
    
    Let $\widehat{\M}_n$ be the double of $\M_n$ through its totally geodesic boundary in the metric $\Tilde{g}_n$. Observe that $\widehat{\M}_n$ contains two isometric copies of $\M_{n-1}$. Denote by $\widehat{\Sigma}_n$ the mean-convex surface in $\widehat{\M}_n$ given by the double of $\Sigma \cap \M_n$. As $\E$ is essential after compressions and compressions of $\widehat{\Sigma}_n$ can be done on each half of the double, after finitely many compressions of $\widehat{\Sigma}_n$ (possibly none) we have an incompressible closed surface $\Sigma_n$ in $\widehat{\M}_n$ that is not homotopic to any other end of $\widehat{\M}_n$ other than possibly the end containing $\widehat{\Sigma}_n$. As $\widehat{\Sigma}_n$ is mean-convex, by \cite{meeks1982embedded} we have an area minimizer $\s_n$ obtained from the homotopy class of $\Sigma_n$. As the complement of the copies of $\M_0\subseteq \M_{n-1}$ in $\M_n$ has a mean-convex foliation, it follows that $\s_n$ must intersect one of these copies of $\M_0\subseteq \M_{n-1}$. As $\s_n\cap \M_{n-1}$ is non-peripheral with respect to all ends except possibly the initial geometrically finite end, it follows that there is a compact set $K\subset \M$ that all $\s_n$ intersect. Hence by making $n\rightarrow+\infty$ and taking a subsequence if necessary, we obtain a finite area minimal surface $\s\subset \M$ as the limit of $\s_n\cap \M_{n-1}$.
\end{pf}

\section{Final Remarks} \label{sec:remarks}

\subsection{Summary of Existence Results} 

In this paper, we study the existence and nonexistence of finite area, embedded minimal surfaces in infinite-volume hyperbolic $3$-manifolds.  After our results, here is a brief summary of the known existence results:

\smallskip

\noindent $\diamond$ Let $\M$ be a complete {\htm} with \textbf{finite volume}:
\begin{itemize}[left=15pt]
\item {\bf $\M$ is closed:} There exists infinitely many closed, embedded minimal surfaces in $\M$~\cite{KM12, song2018existence}.
\item {\bf $\M$ is non-compact:} There exist closed minimal surfaces in $\M$~\cite{collin2017minimal,huang2017closed,chambers2020existence,song2023dichotomy}.
\end{itemize}

\smallskip 

\noindent $\diamond$ Let $\M$ be a hyperbolic $3$-manifold with \textbf{infinite volume without rank-1 cusps}:
\begin{itemize}[left=15pt]
\item {\bf $\M$ is geometrically finite:} There exists a closed, embedded, incompressible minimal surface in $\M$~\cite{SY79,SU82}.
\item {\bf $\M$ is geometrically infinite:} If $\M$ is topologically not a product or a handlebody, then there exists a closed embedded minimal surface in $\M$~\cite{coskunuzer2020minimal}.
\item {\bf $\M$ is geometrically infinite, product, bounded geometry:} Either any such $\M$ contains a closed embedded minimal surface, or there exists such a manifold with minimal foliation~(\Cref{thm:product-dichotomy}).
\item {\bf $\M$ is Schottky (or handlebody):} Schottky manifolds can contain infinitely many closed, embedded minimal surfaces, or none with finite area~(\Cref{cor:Schottky-infty}, \Cref{rem:schottky-no}).
\end{itemize}

\smallskip

\noindent $\diamond$ Let $\M$ be a hyperbolic $3$-manifold with \textbf{infinite volume with rank-1 cusps}:
\begin{itemize}[left=15pt]
\item {\bf $\M$ contains a geometrically finite geometric end:} If $\M$ is topologically not a handlebody or product, then there exists a finite area, complete, embedded, minimal surface in $\M$ (\Cref{thm:existence_1+geofiniteend}, and when $\M$ is {\qf}, see~\cite{HLS23}). 

\end{itemize}

\subsection{Open Questions} \label{sec:open_questions} 

In this part, we discuss some open questions and potential approaches. For {\dd}manifolds with bounded geometry, we proved that either any such manifold contains a closed minimal surface, or if not there must be a doubly degenerate manifold which has a minimal foliation. Notice that this result is closely related with another important question posed by Thurston decades ago, also known as Anderson's Conjecture (for a negative answer) \cite{anderson1983cmh}:

\begin{question} \cite{rubinstein2007problems} \label{question:foliation} Is there a closed {\htm} that contains a (local) foliation by minimal surfaces?    
\end{question}

This is a decades-old famous question in the domain which relates to our alternative result through Virtual Fibering Theorem due to Agol~\cite{agol2013virtual}. With this result, \textit{any closed {\htm} with minimal foliation has a finite cover which fibers over a circle.} Hence, the infinite cover of such a fibered hyperbolic manifold will give a doubly degenerate manifold with bounded geometry as in the statement of Theorem \ref{thm:product-dichotomy}. A negative answer to \Cref{question:foliation} does not rule out the second case in the alternative, as there might be foliations in doubly degenerate manifolds, which do not descend to closed manifolds.

Although our dichotomy result sheds some light on the bounded geometry case, the unbounded geometry case is wide open.

\begin{question} \label{question:unbounded} Let $\M$ be a doubly degenerate hyperbolic $3$-manifold with unbounded geometry. Is there a finite area, complete, embedded minimal surface in $\M$?    
\end{question}

Our proof of \Cref{thm:product-dichotomy} used the bounded geometry assumption in an essential way. If one wants to adapt the arguments to apply to the unbounded geometry case, allowing arbitrarily short geodesics, a more complicated analysis is needed to find a minimizer for the energy functional. This involves serious difficulties first in the harmonic maps part of the argument, where one would need to consider harmonic maps from possibly cusped surfaces into targets without bounded geometry.  There are also complications in applying the theory of convergence for Kleinian groups, which is much more involved if one drops the bounded geometry assumption.  Even so, we believe that one could obtain similar results in this setting, and that it is an interesting topic for future work.

The next question concerns the existence problem for minimal surfaces in general hyperbolic 3-manifolds. All known examples of hyperbolic $3$-manifolds with no finite area, embedded minimal surfaces are geometrically finite (\Cref{rem:schottky-no}). Furthermore, our dichotomy (\Cref{cor:zero-infinity}) shows that the existence of one saddle point minimal surface implies the existence of infinitely many finite area, embedded minimal surfaces. Hence, we pose the following question.


\begin{question} Let $\M$ be a geometrically infinite hyperbolic $3$-manifold with no cusps. Then, is it true that $\M$ contains infinitely many finite area, complete, embedded, minimal surfaces?
\end{question}

For Schottky manifolds, following our zero-infinity dichotomy result and positive examples, a natural question arises:

\begin{question} Characterize the Schottky manifolds that have no closed embedded minimal surface. 
\end{question}

We also ask whether Theorem \ref{thm:conditional} and Corollary \ref{cor:nonconditional} on upper bounds for the number of disjoint stable minimal surfaces in the convex core of a quasi-Fuchsian manifold can be strengthened.

\begin{question}
Is there a computable upper bound for the number of (not necessarily disjoint) stable embedded minimal surfaces in a quasi-Fuchsian manifold, in terms of the volume of its convex core? 
\end{question}

In the setting of hyperbolic 3–manifolds with rank-1 cusps, although we addressed an important special case, the general problem remains wide open.

\begin{question} Let $\M$ be a complete hyperbolic $3$-manifold with rank-1 cusps. Does $\M$ contain a finite area, complete, embedded minimal surface?
\end{question}

A promising strategy is to generalize the shrinkwrapping techniques from \cite{coskunuzer2020minimal} to this cusp setting. However, the absence of a uniform positive lower bound on the injectivity radius makes several steps of that construction highly delicate.  


\bibliographystyle{alpha}
\bibliography{references}

\end{document}